\numberwithin{equation}{section}
\newtheorem{cor}[equation]{Corollary}
\newtheorem{lem}[equation]{Lemma}
\newtheorem{thm}[equation]{Theorem}
\newtheorem{Example}[equation]{Example}
\newtheorem{remark}[equation]{Remark}
\newenvironment{rmk}{\begin{remark}\rm}{\end{remark}}
\def\co{\colon\thinspace}
\newcommand{\const}{\mbox{const.}}
\newcommand{\dvol}{\mbox{dvol}}
\newcommand{\vol}{\operatorname{vol}}
\newcommand{\diam}{\operatorname{diam}}
\newcommand{\Ric}{\mbox{Ric}}
\newcommand{\Rm}{\mbox{Rm}}
\newcommand{\Iso}{\operatorname{Iso}}
\def\g{\gamma}
\def\eps{\varepsilon}
\def\Z{\mathbb{Z}}
\newcommand{\R}{{\mathbb R}}
\newcommand{\C}{{\mathbb C}}
\def\equalsfill{$\m@th\mathord=\mkern-7mu
\cleaders\hbox{$\!\mathord=\!$}\hfill
\mkern-7mu\mathord=$}
\begin{document}

\abovedisplayskip=6pt plus3pt minus3pt
\belowdisplayskip=6pt plus3pt minus3pt

\title[On noncollapsed almost Ricci-flat 4-manifolds]
{\bf On noncollapsed almost Ricci-flat 4-manifolds}

%\thanks{\it 2000 Mathematics Subject classification.\rm\ 
%Primary 53C20, 

%\it\ Keywords:\rm\ nonnegative curvature, soul, 
%pseudoisotopy, space of metrics, diffeomorphism group.}\rm

\author{Vitali Kapovitch \and John Lott}

\address{Vitali Kapovitch\\Department of Mathematics\\
University of Toronto\\
Toronto, ON, Canada M5S 2E4\\}\email{vtk@math.toronto.edu}

\address{John Lott\\Department of Mathematics\\
University of California-Berkeley\\
Berkeley, CA  94720-3804\\}\email{lott@berkeley.edu}

\thanks{ 
The first author was partially supported by an NSERC Discovery Grant. 
The second author was partially supported by NSF grant  
DMS-1510192. 
Research at MSRI was partially supported by 
NSF grant DMS-1440140. We thank MSRI for its hospitality during the 
Spring 2016 program.}
\date{October 13, 2017}
\begin{abstract}
We give topological conditions to ensure that a
noncollapsed almost Ricci-flat $4$-manifold admits a Ricci-flat
metric. One sufficient condition is that the manifold is
spin and has a nonzero $\hat A$-genus. Another condition
is that the fundamental group is infinite or, more generally, of
sufficiently large cardinality.
\end{abstract}
\maketitle
%\tableofcontents

\section{Introduction}

One of the most basic pinching theorems in Riemannian geometry says
that a noncollapsed almost flat manifold admits a flat metric. 
The word noncollapsed refers to a lower volume bound.
More precisely, given $n \in \Z^+$ and $v > 0$, there is some $\epsilon =
\epsilon(n,v) > 0$ so that if $(M,g)$ is a Riemannian $n$-manifold 
with $\vol(M) \ge v$, $\diam(M) \le 1$ and $|\Rm_M| \le \epsilon$,
then $M$ admits a flat Riemannian metric. Here $\Rm_M$ denotes the
Riemann curvature tensor. This result seems to have first been stated
by Gromov in \cite{Gr78}, where he noted that it follows from
Cheeger's arguments in \cite{Ch69}. (The point of \cite{Gr78} was to
characterize what happens when one removes the volume assumption.)

One can ask if there is an analogous statement for noncollapsed
almost Ricci-flat manifolds.  In dimension less than four, being
almost Ricci-flat is the same as being almost flat. Hence the first
interesting case is in dimension four. We give topological conditions
to ensure that a noncollapsed almost Ricci-flat $4$-manifold admits
a Ricci-flat metric. 
We also give more general results about noncollapsed manifolds with 
almost nonnegative Ricci curvature, or almost 
nonnegative scalar curvature and bounded Ricci curvature.

In the rest of the introduction we state the main results,
outline the proof of Theorem~\ref{main-thm}, mention some earlier
related results and give the structure of the paper.

\subsection{Statement of results}

The first result is in four dimensions.
Recall that
the $\widehat{A}$-genus $\widehat{A}(M)$ of a closed oriented manifold $M$
is a certain rational combination of the Pontryagin numbers of $M$.
In four dimensions, $\widehat{A}(M)$ equals minus one
eighth of the signature of $M$.
We consider a noncollapsed spin $4$-manifold with
almost nonnegative scalar curvature and a nonzero $\widehat{A}$-genus.
With an upper bound on the Ricci curvature, the conclusion is that
the manifold
must be diffeomorphic to a $K3$ surface.

Let $S$ denote scalar curvature.

\begin{thm}\label{main-thm}
Given $v>0$ and $\Upsilon < \infty$, 
there is an $\eps = \eps(v,\Upsilon) >0$ with the following property.
Suppose that $(M,g)$ is a closed connected Riemannian spin $4$-manifold with
$\hat A(M)\ne 0$, $\vol(M)\ge v$, $\diam(M)\le 1$, $\Ric_M \le \Upsilon g$ and
$S_M \ge - \epsilon$.
Then $M$ is diffeomorphic to a $K3$ surface. 
\end{thm}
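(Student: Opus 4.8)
\emph{Proof proposal.} The plan is to argue by contradiction. If the conclusion fails for some $v$ and $\Upsilon$, there is a sequence of closed connected Riemannian spin $4$-manifolds $(M_i,g_i)$ with $\hat A(M_i)\ne 0$, $\vol(M_i)\ge v$, $\diam(M_i)\le 1$, $\Ric_{M_i}\le\Upsilon g_i$ and $S_{M_i}\ge-\tfrac1i$, none of which is diffeomorphic to a $K3$ surface. First I would observe that the hypotheses already force a two-sided Ricci bound: writing $\lambda_1\le\dots\le\lambda_4$ for the eigenvalues of $\Ric_{g_i}$, the upper bound gives $\lambda_2,\lambda_3,\lambda_4\le\Upsilon^+$, so $\lambda_1\ge S_{M_i}-3\Upsilon^+\ge-\tfrac1i-3\Upsilon^+$; hence $|\Ric_{g_i}|\le\Lambda$ for all large $i$, with $\Lambda=3\Upsilon^++1$. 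Together with the noncollapsing and the diameter bound, the a priori $L^2$-curvature estimate of Cheeger--Naber then gives $\int_{M_i}|\Rm_{g_i}|^2\le C(v,\Lambda)$; by Chern--Gauss--Bonnet and the signature formula this bounds $\chi(M_i)$ and $|\sigma(M_i)|=8|\hat A(M_i)|$, and Bishop--Gromov bounds $\vol(M_i)$ from above.

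Next I would extract a subsequential Gromov--Hausdorff limit $(M_i,g_i)\to X$. By the structure theory for noncollapsed limits of $4$-manifolds with bounded Ricci curvature and bounded $L^2$-curvature --- due to Anderson, Bando--Kasue--Nakajima and Tian in the Einstein case, and available in the present generality through the work of Cheeger--Naber and its four-dimensional refinements --- $X$ is a compact $4$-dimensional Riemannian orbifold with finitely many singular points $x_1,\dots,x_N$, each modeled on $\R^4/\Gamma_j$ with $\Gamma_j\subset SO(4)$ finite, and the convergence $M_i\to X$ proceeds by bubbling: away from the $x_j$ the metrics converge smoothly, while suitable rescalings near the $x_j$ converge to Ricci-flat ALE bubbles $Y_j$. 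Moreover, for $i$ large, $M_i$ is diffeomorphic to the resolution $\widehat X$ of $X$ obtained by excising small neighborhoods of the $x_j$ and gluing in the $Y_j$.

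The spin and $\hat A$ hypotheses serve to make all of this Ricci-flat and hyperk\"ahler. Since $M_i$ is spin with $\hat A(M_i)=\operatorname{ind}D_{M_i}\ne 0$, the Dirac operator has nonzero kernel, so there is a harmonic spinor $\psi_i\ne 0$; normalize $\|\psi_i\|_{L^2}=1$. The Lichnerowicz formula and $S_{M_i}\ge-\tfrac1i$ give $\|\nabla\psi_i\|_{L^2}^2=-\tfrac14\int_{M_i}S_{M_i}|\psi_i|^2\le\tfrac1{4i}$, so $\psi_i$ is almost parallel. By Kato's inequality $\|\nabla|\psi_i|\|_{L^2}^2\le\tfrac1{4i}$, and since the lower Ricci bound and the diameter bound yield a uniform positive lower bound for the first Laplace eigenvalue of $(M_i,g_i)$, the function $|\psi_i|$ is $L^2$-close to the constant $\vol(M_i)^{-1/2}$, which is bounded away from $0$. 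Consequently $\psi_i$ converges --- on the regular part of $X$, and after rescaling on each bubble $Y_j$ --- to a \emph{nonzero} parallel spinor; a nonzero parallel spinor forces Ricci-flatness and holonomy in $SU(2)$. Thus $X$ is a Ricci-flat hyperk\"ahler $4$-orbifold with $SU(2)$-quotient singularities and each $Y_j$ is an ALE hyperk\"ahler $4$-manifold, i.e.\ one of Kronheimer's spaces. Hence $\widehat X$ is the crepant hyperk\"ahler resolution of a compact hyperk\"ahler $4$-orbifold, so $\widehat X$ is diffeomorphic either to $T^4$ or to a $K3$ surface; since $\hat A(\widehat X)=\hat A(M_i)\ne 0$, the torus $T^4$ is excluded and $M_i$ is diffeomorphic to a $K3$ surface --- contradicting the choice of the $M_i$.

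I expect the main obstacle to be the second step: establishing, for bounded (rather than Einstein) Ricci curvature, the precise orbifold-plus-bubbles description of the limit together with the diffeomorphism $M_i\cong\widehat X$. The almost-parallel-spinor argument of the third step is comparatively soft once the analytic framework is in place --- convergence of spinor bundles over the regular and bubble regions, the eigenvalue lower bound, and $\e$-regularity --- but one must take care that the spinors do not concentrate and that the groups $\Gamma_j$ in fact lie in $SU(2)$; both follow from the norm of $\psi_i$ being nearly constant over all of $M_i$, together with the classification of Ricci-flat ALE $4$-manifolds.
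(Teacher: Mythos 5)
Your proposal is correct in outline and follows essentially the same route as the paper: contradiction sequence, two-sided Ricci bound, Cheeger--Naber $L^2$-curvature bound, orbifold limit with ALE bubbles, almost-parallel harmonic spinors via Lichnerowicz and the Poincar\'e inequality, hyperk\"ahler limit and bubbles, and exclusion of $T^4$ by $\hat A\ne 0$. The only notable differences of detail are that the paper obtains the parallel spinor on each ALE bubble not by rescaling the $\psi_i$ but by a Witten/Nakajima positive-mass argument (using spin-structure compatibility and the order-$4$ decay to force the mass to vanish), and that the bubbles may themselves be orbifolds, requiring an iterated bubble-tree induction rather than a single layer of Kronheimer spaces.
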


As a consequence, if a noncollapsed almost Ricci-flat $4$-manifold
is spin, and has a nonzero $\widehat{A}$-genus, then it
admits a Ricci-flat metric.

\begin{cor} \label{main-cor}
Given $v>0$, there is an $\eps^\prime = \eps^\prime(v) >0$ 
with the following property.
Suppose that $(M,g)$ is a closed connected Riemannian spin $4$-manifold with
$\hat A(M)\ne 0$, $\vol(M)\ge v$, $\diam(M)\le 1$
and $|\Ric_M|\le \eps^\prime$.
Then $M$ is diffeomorphic to a $K3$ surface.  In particular, $M$ admits
a Ricci-flat metric.
\end{cor}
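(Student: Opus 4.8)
The plan is to obtain Corollary~\ref{main-cor} as an immediate consequence of Theorem~\ref{main-thm}, by freezing the upper curvature bound at a fixed value and then invoking Yau's theorem for the last assertion. First I would fix $\Upsilon = 1$ and let $\eps = \eps(v,1) > 0$ be the constant furnished by Theorem~\ref{main-thm}. I then define $\eps^\prime = \eps^\prime(v) := \min\{1, \eps/4\}$ (with the $4$ replaced by the appropriate dimensional constant if one uses the Frobenius rather than the operator norm for symmetric $2$-tensors).

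Next I would verify that a manifold $(M,g)$ satisfying the hypotheses of the corollary with this $\eps^\prime$ satisfies those of Theorem~\ref{main-thm}. The conditions $\hat A(M) \ne 0$, $\vol(M) \ge v$ and $\diam(M) \le 1$ are common to both statements. The pointwise bound $|\Ric_M| \le \eps^\prime \le 1$ forces every eigenvalue of $\Ric_M$ to lie in $[-\eps^\prime, \eps^\prime]$ at each point, whence $\Ric_M \le \eps^\prime g \le g = \Upsilon g$ and $S_M = \operatorname{tr} \Ric_M \ge - 4\eps^\prime \ge - \eps$. Thus Theorem~\ref{main-thm} applies and $M$ is diffeomorphic to a $K3$ surface.

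For the final sentence I would use the standard fact that a $K3$ surface is a compact Kähler surface with $c_1 = 0$, so by Yau's solution of the Calabi conjecture it admits a Ricci-flat Kähler metric; pulling this metric back through the diffeomorphism produces a Ricci-flat Riemannian metric on $M$. I do not expect any genuine obstacle: all the content lies in Theorem~\ref{main-thm}, and the corollary is essentially bookkeeping. The only points meriting a word of care are the passage from the pointwise bound $|\Ric_M| \le \eps^\prime$ to the two hypotheses $S_M \ge -\eps$ and $\Ric_M \le \Upsilon g$ (a matter of conventions on tensor norms), and the observation that being diffeomorphic to a $K3$ surface yields a Ricci-flat metric only by appeal to Yau's theorem.
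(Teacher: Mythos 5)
Your proposal is correct and follows exactly the route the paper intends: the corollary is stated as an immediate consequence of Theorem~\ref{main-thm}, obtained by fixing $\Upsilon$, shrinking $\eps^\prime$ so that $|\Ric_M|\le\eps^\prime$ yields both $\Ric_M\le\Upsilon g$ and $S_M\ge-\eps$, and then invoking Yau's theorem to equip the $K3$ surface with a Ricci-flat metric.
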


For example, if $M = K3 \# (S^2\times S^2)$
then there is no Riemannian
metric on $M$ with  $\vol(M)\ge v$, $\diam(M)\le 1$ and 
$|\Ric_M|\le \eps^\prime$.

If $\pi_1(M)$ is infinite, or of sufficiently large cardinality, 
then we have the following 
related 
$n$-dimensional result, which does not involve any spin assumption on $M$.

\begin{thm}\label{main-large-pi1}
Given $n \in \Z^+$,  $v>0$ and $\Lambda, \Upsilon < \infty$, there exist 
$\eps = \eps(n, v, \Lambda, \Upsilon)>0$, 
$C =C(n, v, \Lambda, \Upsilon) < \infty$,
$\eps^\prime = \eps^\prime(n, v, \Lambda)>0$ and 
$C^\prime =C^\prime(n, v, \Lambda) < \infty$ with the
following property. 
Suppose that
$(M,g)$ is an $n$-dimensional closed connected Riemannian manifold with
$\int_M | \Rm_M |^{\frac{n}{2}} \: \dvol_M \le \Lambda$,
$\vol(M)\ge v$ and $\diam(M)\le 1$.
\begin{enumerate}
\item If $- \epsilon g \le \Ric_M \le \Upsilon g$ and
$|\pi_1(M)|\ge C$ then $M$ admits a $W^{2,p}$-regular Riemannian
metric $h$ with nonnegative measurable Ricci curvature 
for which the universal cover $(\tilde{M}, \tilde{h})$
isometrically splits off an $\R$-factor.
{
In particular, $\pi_1(M)$ is infinite.
}
\item
If $ |\Ric_M|\le \eps^\prime$ and $|\pi_1(M)|\ge C^\prime$ then $M$ admits a 
Ricci-flat metric $h^\prime$ for which the universal cover 
$(\tilde{M}, \tilde{h}^\prime)$
isometrically splits off an $\R$-factor.
\end{enumerate}
\end{thm}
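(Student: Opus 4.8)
The plan is to prove both statements by contradiction, obtaining (2) as a refinement of the argument for (1). Suppose (1) fails for some fixed $n,v,\Lambda,\Upsilon$: then there is a sequence of closed connected $n$-manifolds $(M_i,g_i)$ with $\int_{M_i}|\Rm_{M_i}|^{\frac n2}\,\dvol_{M_i}\le\Lambda$, $\vol(M_i)\ge v$, $\diam(M_i)\le1$, $-\tfrac1i g_i\le\Ric_{M_i}\le\Upsilon g_i$, $|\pi_1(M_i)|\to\infty$, no $M_i$ of which admits a $W^{2,p}$-regular metric with nonnegative measurable Ricci curvature whose universal cover isometrically splits off an $\R$-factor. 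Passing to universal covers $(\tilde M_i,\tilde g_i)$ with deck groups $\Gamma_i=\pi_1(M_i)$, these are noncollapsed with the same Ricci bounds, and $\Gamma_i$ acts isometrically with quotient of diameter $\le1$. Choosing basepoints $\tilde p_i$, I would extract a pointed equivariant Gromov--Hausdorff limit $(\tilde M_i,\tilde p_i,\Gamma_i)\to(Y,p_\infty,G)$ with $G\le\Iso(Y)$ closed; noncollapsing and $\Ric\ge-\tfrac1i$ give local precompactness by Gromov's theorem, equivariant compactness by Fukaya--Yamaguchi, and $\Ric_Y\ge0$ in the Cheeger--Colding sense. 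Using a local $L^{\frac n2}$-bound on the covers together with Anderson/Bando--Kasue--Nakajima-type $\eps$-regularity, $Y$ is in addition a Riemannian orbifold (metric of class $C^{1,\alpha}\cap W^{2,p}$) away from a \emph{discrete} set of orbifold points, with $\tilde M_i\to Y$ in the pointed $C^{1,\alpha}$-topology off that set.

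Next I would rule out $G$ being compact. Since $Y/G$ is compact of diameter $\le1$, if $G$ were compact then $Y$ would be compact; but then Colding's volume-continuity theorem gives $\vol(\tilde M_i)\to\vol(Y)<\infty$, contradicting $\vol(\tilde M_i)=|\Gamma_i|\vol(M_i)\ge|\Gamma_i|v\to\infty$. So $Y$ is noncompact and admits a cocompact isometric action, hence contains a line (translate longer and longer minimal segments of a ray toward $p_\infty$ using cocompactness and pass to a limit). By the Cheeger--Colding splitting theorem for noncollapsed Ricci limit spaces, $Y=\R\times Z$ isometrically, where $Z$ is again such a noncollapsed $(n-1)$-dimensional limit with $\Ric_Z\ge0$. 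The decisive observation is that $Z$ can have no singular points at all: an orbifold point $z_0\in Z$ would yield the whole line $\R\times\{z_0\}$ of singular points of $Y$, contradicting discreteness of the singular set. Hence $Z$ is a smooth $(n-1)$-manifold carrying a $C^{1,\alpha}\cap W^{2,p}$ metric $g_Z$, the limit $\R\times Z$ is smooth, and $\tilde M_i\to\R\times Z$ in the pointed $C^{1,\alpha}$-topology with no bubbling.

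The transfer back to $M_i$ is the step I expect to be the main obstacle. For $i$ large, $C^{1,\alpha}$-closeness allows one to choose a $\Gamma_i$-equivariant $C^{1,\alpha}$-diffeomorphism $\Phi_i\co\tilde M_i\to\R\times Z$; equivariance is arranged because the actions converge, and $\Iso(\R\times Z)=\Iso(\R)\times\Iso(Z)$ since $Z$ has no further $\R$-factor, so $\Gamma_i$ acts on the $\R$-factor through a cocompact subgroup of $\Iso(\R)$ (inherited from $G$), in particular through an infinite group---whence $\pi_1(M_i)$ is infinite for $i$ large, giving also the parenthetical assertion in (1). Pulling back the split metric $dt^2+g_Z$, which is $W^{2,p}$ and has $\Ric\ge0$ measurably because $\Ric_Z\ge0$ (the inequality passing from $\Ric_{\tilde M_i}\ge-\tfrac1i$ under $W^{2,p}$-convergence), produces a $\Gamma_i$-invariant metric on $\tilde M_i$ descending to a metric $h_i$ on $M_i$ with all the required properties---contradiction. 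The genuinely delicate points are: (i) securing the local $L^{\frac n2}$-curvature bound on the covers so that the orbifold-regularity theory applies, which requires bounding the number of short deck transformations near $\tilde p_i$ (via the generalized Margulis lemma of Kapovitch--Wilking together with the uniform bound on the number of curvature-concentration points of $M_i$), and (ii) producing $\Phi_i$ equivariant and of the right regularity near the (a priori present, then excluded) singular locus.

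For (2), I would run the same argument with $|\Ric_{M_i}|\le\eps_i'\to0$. Then the limiting Ricci tensor vanishes, so $g_Z$ is a $W^{2,p}$ Ricci-flat metric on $Z$; by elliptic regularity for Einstein metrics in harmonic coordinates it is smooth, and pulling $dt^2+g_Z$ back by $\Phi_i$ and descending yields a smooth Ricci-flat metric $h_i'$ on $M_i$ whose universal cover isometrically splits off an $\R$-factor, again contradicting the choice of $(M_i,g_i)$.
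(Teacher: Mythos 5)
Your overall architecture matches the paper's: contradiction sequence, equivariant pointed Gromov--Hausdorff limit of the universal covers, noncompactness of $Y$ from $|\pi_1(M_i)|\to\infty$ plus volume comparison, the Cheeger--Colding/Cheeger--Gromoll splitting $Y\cong\R^m\times Z$, and the observation that an orbifold point of $Z$ would produce a non-isolated singular set in $Y$, forcing $Y$ to be smooth. (Your worry (i) about the local $L^{n/2}$ bound on the covers is real but is handled much more simply than you suggest: Bishop--Gromov bounds the covering multiplicity of $B_r(\tilde m_i)\to B_r(m_i)$ by $\vol(B_{10r}(\tilde m_i))/\vol(B_r(m_i))$, so the $L^{n/2}$ energy in an $r$-ball of $\tilde M_i$ is at most a constant times $\Lambda$; no Margulis-type input is needed.)

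The genuine gap is the descent step, which you correctly flag as the main obstacle but then dispose of with ``equivariance is arranged because the actions converge.'' Two things are missing. First, equivariant Gromov--Hausdorff convergence $(\tilde M_i,\tilde p_i,\Gamma_i)\to(Y,\tilde p,\Gamma)$ produces a limit group $\Gamma\le\Iso(Y)$ but no homomorphism between $\Gamma_i$ and $\Gamma$; without an identification of $\Gamma_i$ with a discrete, free, cocompact subgroup of $\Iso(Y)$, there is no $\Gamma_i$-invariant pullback metric to push down to $M_i$. Second, even the limit group $\Gamma$ need not a priori act freely on $Y$: since $Y$ is smooth and $X=Y/\Gamma$ is an orbifold with isolated singular points, $\Gamma$ could have isolated fixed points, and this must be excluded. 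The paper's route avoids your first problem entirely and isolates the second: it also takes the (non-equivariant) limit $M_i\to X$ downstairs using Anderson's theorem, proves a separate lemma that $\Gamma$ acts freely on $Y$ (if $\gamma\in\Gamma$ fixed $q$, one transplants $\gamma$ to an approximating $\gamma_i\in\Gamma_i$, squeezes a topological disk $D_i$ around $q_i$ so that $\phi_i\circ\gamma_i$ maps $D_i$ to itself without fixed points, and contradicts Brouwer), concludes that $X$ is a $W^{3,p}$-manifold, and then invokes ordinary convergence theory to get a diffeomorphism $M_i\cong X$ for large $i$; the metric $h_i$ is the pullback of $g_X$, whose universal cover is $(Y,g_Y)=\R^m\times Z$. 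To complete your version you would need both an analogue of that free-action lemma and a construction of the equivariant diffeomorphism $\Phi_i$ (or else you should switch to the quotient-side argument). Part (2) then goes through as you describe, with the vanishing measurable Ricci tensor upgraded to smoothness by elliptic regularity in harmonic coordinates.
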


In the four dimensional case, the conclusions of 
Theorem \ref{main-large-pi1} can be
made more precise. Theorem \ref{main-large-pi1}(1) becomes a statement about manifolds with
almost
nonnegative Ricci curvature, again under an upper Ricci curvature
bound. 
Theorem \ref{main-large-pi1}(2) becomes
a second sufficient topological condition,
involving the fundamental group, for
a noncollapsed almost Ricci-flat $4$-manifold to admit a Ricci-flat metric.

\begin{cor} \label{cor-large-pi1}
Given  $v>0$ and $\Upsilon < \infty$, there exist $\eps^{\prime \prime} = 
\eps^{\prime \prime}(v, \Upsilon)>0$, 
$C^{\prime \prime} =C^{\prime \prime}(v, \Upsilon) < 
\infty$,  $\eps^{\prime \prime \prime} = 
\eps^{\prime \prime \prime}(v)>0$ and 
$C^{\prime \prime \prime} =C^{\prime \prime \prime}(v) < 
\infty$ with the
following property. Suppose that
$(M,g)$ is a closed connected Riemannian $4$-manifold with
$ \vol(M)\ge v$ and $\diam (M)\le 1$.
\begin{enumerate}
\item If $- \eps^{\prime \prime} g \le \Ric_M \le \Upsilon g$ and 
$|\pi_1(M)|\ge C^{\prime \prime}$ then $M$ admits a smooth
Riemannian metric with nonnegative Ricci curvature for which
the universal cover $(\tilde M, \tilde g)$ isometrically splits off
an $\R$-factor.
\item If  $|\Ric_M|\le \eps^{\prime \prime \prime}$ and 
$|\pi_1(M)|\ge C^{\prime \prime \prime}$ then $M$ admits a flat metric. 
\end{enumerate}
\end{cor}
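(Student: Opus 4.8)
The plan is to deduce Corollary~\ref{cor-large-pi1} from Theorem~\ref{main-large-pi1} in the case $n=4$, supplying two four-dimensional ingredients: an a priori $L^2$ bound on the curvature, so that the hypothesis $\int_M|\Rm_M|^2\,\dvol_M\le\Lambda$ becomes automatic; and the rigidity of Ricci-flat and of nonnegatively Ricci-curved metrics in dimension three, to sharpen the conclusions. In case~(1), after shrinking $\eps''$ so that $\eps''\le\Upsilon$, we have $|\Ric_M|\le\Upsilon$; in case~(2), after shrinking $\eps'''$ so that $\eps'''\le1$, we have $|\Ric_M|\le1$. In either case $\vol(M)$ is also bounded above by Bishop--Gromov, and by the theory of four-manifolds with a two-sided Ricci bound, noncollapse and bounded diameter (via $\eps$-regularity, orbifold compactness and the Chern--Gauss--Bonnet formula, which in particular bounds $\chi(M)$) one obtains an a priori bound $\int_M|\Rm_M|^2\,\dvol_M\le\Lambda$ with $\Lambda=\Lambda(v,\Upsilon)$ in case~(1) and $\Lambda=\Lambda(v)$ in case~(2). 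One may then invoke Theorem~\ref{main-large-pi1} with this value of $\Lambda$, and set $\eps'':=\min\{\Upsilon,\eps(4,v,\Lambda(v,\Upsilon),\Upsilon)\}$, $C'':=C(4,v,\Lambda(v,\Upsilon),\Upsilon)$, $\eps''':=\min\{1,\eps'(4,v,\Lambda(v))\}$ and $C''':=C'(4,v,\Lambda(v))$; these depend only on the asserted parameters.

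Case~(2) is then immediate. Theorem~\ref{main-large-pi1}(2) produces a Ricci-flat metric $h'$ on $M$ whose universal cover isometrically splits as $(\R\times N^3,\,dr^2\oplus h_N)$ with $(N^3,h_N)$ Ricci-flat; since a Ricci-flat $3$-manifold is flat, $dr^2\oplus h_N$ is flat, and therefore $h'$ is flat. (If the universal cover in fact splits off several $\R$-factors one simply absorbs the extra ones into $N^3$, which is then still Ricci-flat.)

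For case~(1), Theorem~\ref{main-large-pi1}(1) produces a $W^{2,p}$-regular metric $h$ on $M$ with nonnegative measurable Ricci curvature whose universal cover isometrically splits as $(\R\times N^3,\,dr^2\oplus h_N)$. I would run the Ricci flow with initial data $h$: by short-time existence and instantaneous smoothing for the Ricci flow from such rough initial data on a closed manifold, there is a solution $h(t)$ on $M$, $t\in(0,T]$, that is smooth for $t>0$. Its lift $\tilde h(t)$ to $\tilde M$ is a complete Ricci flow of bounded curvature starting from the product $dr^2\oplus h_N$; since the Ricci flow preserves isometric splittings, $\tilde h(t)=dr^2\oplus h_N(t)$, where $h_N(t)$ is the three-dimensional Ricci flow evolving $h_N$. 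By Hamilton's maximum-principle argument in dimension three, $h_N(t)$ has nonnegative Ricci curvature for $t>0$, hence so does $\tilde h(t)=dr^2\oplus h_N(t)$, and therefore so does $h(t)$ on $M$. Choosing $t>0$ small then gives the required smooth metric on $M$, whose universal cover still isometrically splits off an $\R$-factor.

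I expect the main obstacle to be the last step: making rigorous the short-time existence, uniqueness and $\pi_1(M)$-equivariance of the Ricci flow from the merely $W^{2,p}$ metric $h$ (in particular on the noncompact cover $\tilde M$, which does inherit bounded geometry from the closed manifold $M$), and propagating the weak, measurable nonnegativity of $\Ric_{N^3}$ at time zero through the three-dimensional flow to genuine nonnegativity for $t>0$. A lesser but still nontrivial point is the a priori $L^2$ curvature estimate used in the first paragraph, which one should quote from the four-dimensional literature rather than reprove.
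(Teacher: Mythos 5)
Your overall strategy is the paper's: make the $L^2$ curvature hypothesis of Theorem~\ref{main-large-pi1} automatic in dimension four, then exploit that the compact factor of the split universal cover has dimension at most three. Your treatment of part (2) agrees with the paper (Ricci-flat in dimension $\le 3$ implies flat), and your bookkeeping of the constants is fine. Two caveats on the first paragraph: the paper gets the bound $\int_M|\Rm|^2\,\dvol_M\le\Lambda$ directly from Cheeger--Naber \cite[Theorem 1.13]{Ch-Na-codim4}; your parenthetical sketch via ``$\eps$-regularity, orbifold compactness and Chern--Gauss--Bonnet'' is circular, since orbifold compactness and the bound on $\chi(M)$ are consequences of, not routes to, the a priori $L^2$ curvature bound -- this is exactly the hard content of \cite{Ch-Na-codim4}. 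You are right, though, that one should simply quote it, and right that the resulting $\Lambda$ depends on $\Upsilon$ in case (1).

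The genuine gap is the step you yourself flag in part (1), and it is not a removable technicality of the form you suggest: Hamilton's maximum-principle argument that nonnegative Ricci curvature is preserved by three-dimensional Ricci flow requires the curvature condition to hold classically at the initial time (or at least continuity of the curvature up to $t=0$), whereas here $\Ric_{h}\ge 0$ holds only as a measurable condition on a $W^{2,p}$ metric. Propagating such a weak curvature lower bound through the flow is precisely the hard analytic content, and the paper does not attempt it directly. Instead it works on the compact factor $K$ (of dimension $\le 3$): it fixes a smooth structure on $K$ for which the compact Lie group $\Iso(K)$ acts smoothly (Palais \cite{Pa70}), mollifies $g_K$ equivariantly to obtain smooth metrics $g_k$ with uniform diameter and volume bounds and $\Ric_{g_k}\ge -\frac1k$ (note that mollification destroys exact nonnegativity, which is why a pinched lower bound is all one can retain), and then invokes Simon's stability theorem \cite[Corollary 1.12]{Si12} for noncollapsed three-manifolds with almost nonnegative Ricci curvature -- itself a Ricci flow result, but one whose entire point is to handle exactly this passage from $\Ric\ge-\frac1k$ to a genuinely nonnegatively curved smooth metric. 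The equivariance of that construction then yields the metric on $(\R^m\times K)/\pi_1(M)\cong M$. So to complete your argument you should replace the appeal to Hamilton's maximum principle by this smoothing-plus-\cite{Si12} step (or supply an equivalent theorem on Ricci flow from $W^{2,p}$ initial data preserving weak Ricci lower bounds, together with the uniqueness needed to see that the lifted flow respects the splitting and the deck-group action).
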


As an example of Corollary \ref{cor-large-pi1}(2), 
let $M$ be the result of performing 
surgery along an embedded circle in $T^4$, i.e.
removing a tubular neighborhood of the circle
and attaching a copy of $D^2 \times S^2$.
Then there is no Riemannian
metric on $M$ with  $\vol(M)\ge v$, $\diam(M)\le 1$ and $|\Ric_M|\le 
\eps^{\prime \prime \prime}$.
(On the other hand, Anderson showed that if the circle is a meridian
curve in $T^4$, and one performs 
surgery with respect to the canonical trivialization of its normal bundle,
then the resulting manifold does admit a sequence of collapsing almost 
Ricci-flat metrics ~\cite[Theorem 0.4]{An92}.)

\begin{rmk}
The diffeomorphism types of the manifolds $M$ in the conclusion of
Corollary \ref{cor-large-pi1}(1) are easy to describe, using
\cite[Section 9]{Ha86}.
\end{rmk}

\begin{rmk}
The known Ricci-flat closed $4$-manifolds are flat (hence with infinite
fundamental group) or are finitely covered by a $K3$ surface
(which is spin and has $\widehat{A}(K3) = 2$). In view of this fact, the
topological hypotheses of Corollary \ref{main-cor} and 
Corollary \ref{cor-large-pi1}(2)
are not unreasonable.
\end{rmk}

Finally, we give a fundamental group restriction on noncollapsed manifolds
with almost nonnegative Ricci curvature.

\begin{thm}\label{thm-lower-ricci-noncol}
Given $n \in \Z^+$ and $v > 0$, there are 
$\eps = \eps(n,v) >0$ and $I = I(n,v) < \infty$
with the following property.
Suppose that $(M,g)$ is a closed connected Riemannian $n$-manifold with
$\vol(M) \ge v$, $\diam(M)\le 1$ and $\Ric_M \ge - \eps$.
Then $\pi_1(M)$ has an abelian subgroup (of 
index at most $I$) generated by at most $n$ elements.
\end{thm}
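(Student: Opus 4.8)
The plan is to argue by contradiction and compactness, combining the structure theory of limits of noncollapsed manifolds with a lower Ricci bound with Kapovitch--Wilking's generalization of the Margulis lemma. Suppose the statement fails for some fixed $n$ and $v$. Then for each $j$ there is a closed connected Riemannian $n$-manifold $(M_j, g_j)$ with $\vol(M_j) \ge v$, $\diam(M_j) \le 1$ and $\Ric_{M_j} \ge - \frac{1}{j}$, but such that $\pi_1(M_j)$ has no abelian subgroup of index $\le j$ generated by at most $n$ elements. After passing to a subsequence, $(M_j, g_j)$ converges in the Gromov--Hausdorff sense to a compact length space $(X, d)$; the lower volume bound $\vol(M_j) \ge v$ together with $\diam(M_j) \le 1$ and $\Ric_{M_j} \ge - \frac1j$ forces $X$ to be noncollapsed, i.e.\ its Hausdorff dimension is $n$ and its $n$-dimensional Hausdorff measure is positive (using the Bishop--Gromov volume comparison and Colding's volume continuity).

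Next I would invoke the equivariant/Margulis-type structure at the level of fundamental groups. By the generalized Margulis lemma of Kapovitch--Wilking, there is a constant $C(n) < \infty$ and a radius $\rho(n) > 0$ so that for any point $p_j \in M_j$ (with $j$ large, so that the lower Ricci bound is $\ge - (n-1)$ on, say, the unit ball) the image of $\pi_1(B_\rho(p_j)) \to \pi_1(M_j)$ contains a nilpotent subgroup of index $\le C(n)$ and of nilpotency length $\le n$. The point of the noncollapsing hypothesis is to upgrade "nilpotent" to "abelian on a finite-index subgroup" and to control the index uniformly by a constant $I(n,v)$: in the noncollapsed limit $X$, the local fundamental groups cannot contribute a genuinely nilpotent (nonvirtually-abelian) piece, because such a piece would be detected by collapsing in the universal cover, contradicting the two-sided noncollapsing; more precisely one analyzes the action of the deck group on $\tilde{M}_j$ and its equivariant Gromov--Hausdorff limit, where a lower Ricci bound on a noncollapsed space gives a limit group acting on a space with a well-behaved tangent cone structure, and the isotropy/translational parts are virtually abelian of rank $\le n$ by the splitting theorem applied to the blow-up. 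Combining the uniform index bound with the bounded number of generators (again $\le n$, coming from the dimension of the limit and a standard short-generators argument à la Gromov) yields, for all large $j$, an abelian subgroup of $\pi_1(M_j)$ of index $\le I(n,v)$ generated by $\le n$ elements, contradicting the choice of $(M_j, g_j)$ once $j > \max(I(n,v), \text{the relevant curvature threshold})$.

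The main obstacle is the step that converts the Kapovitch--Wilking nilpotent subgroup into a \emph{virtually abelian} one with a \emph{uniform} index bound depending only on $n$ and $v$. The generalized Margulis lemma alone only gives virtual nilpotency; ruling out a non-virtually-abelian nilpotent structure requires genuinely using the lower volume bound — heuristically, a nontrivial nilpotent (non-abelian) local fundamental group forces the universal cover to collapse in a way incompatible with a noncollapsed, almost-nonnegatively-Ricci-curved limit of Hausdorff dimension exactly $n$. Making this precise will use the structure of the limit space $X$ (its regular set is dense and has Euclidean tangent cones $\R^n$ a.e.\ by Cheeger--Colding), together with an equivariant limit argument for the deck transformation action and the splitting theorem, to conclude that the relevant group of "translations" is a lattice in $\R^k$ for some $k \le n$, hence abelian and finitely generated by $\le n$ elements, with the index of this lattice in the full local group bounded by a constant depending on the noncollapsing scale. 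Once the index is bounded locally at \emph{every} point by $I(n,v)$, the standard covering/nerve argument globalizes it to $\pi_1(M)$.
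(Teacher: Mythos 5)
Your overall strategy --- contradiction, equivariant Gromov--Hausdorff limits of the universal covers, the splitting theorem, and crystallographic group structure on the Euclidean factor --- is the right one, and is essentially what the paper does (the paper does not need Kapovitch--Wilking at all; the splitting theorem plus Bieberbach gives the virtually abelian structure of the limit group directly, so your detour through ``upgrading nilpotent to abelian'' is solving a harder problem than necessary). But there are two genuine gaps at exactly the points where the lower volume bound must do work. First, you never establish that the limit group $\Gamma$ acting on $Y$ is \emph{discrete}. Without discreteness, $\Gamma$ is merely a closed subgroup of $\Iso(\R^m)\times\Iso(Z)$, its image in $\Iso(\R^m)$ need not be a crystallographic group, and the Bieberbach structure you invoke is unavailable. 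The paper gets discreteness by choosing the basepoints $p_i$ to converge to a \emph{regular} point $p$ of the noncollapsed limit $X$: by Cheeger--Colding's topological stability (Theorem A.1.8 of \cite{CC97}), a definite ball $B_\eps(p_i)$ sits inside a topological disk in $M_i$, so every loop at $p_i$ of length at most $\eps/2$ is contractible; this gives a uniform lower bound $\eps/2$ on the displacement $d(\tilde p_i,\gamma_i\tilde p_i)$ of every nontrivial deck transformation, which passes to the limit and forces $\Gamma$ to be discrete (and the kernel $H$ of $\Gamma\to\Iso(\R^m)$ to be finite).

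Second, and more seriously, even granting that $\Gamma$ contains a free abelian subgroup of rank $m\le n$ and index $c<\infty$, you have no mechanism for transporting this back to $\pi_1(M_j)$ for large $j$. Gromov--Hausdorff convergence by itself gives no isomorphism, and no surjection in either direction, between $\Gamma$ and $\Gamma_j=\pi_1(M_j)$; your closing appeal to a ``standard covering/nerve argument'' addresses globalizing a local bound inside one manifold, not this transfer problem. The paper resolves it by fixing a finite presentation of $\Gamma$, lifting each generator to a unique element of $\Gamma_j$ (uniqueness again uses the displacement gap $\eps/2$), and checking that every relation of $\Gamma$ holds for the lifts --- because a failed relation would produce a noncontractible loop at $p_j$ of length at most $\eps/2$, contradicting the local contractibility above. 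This yields a homomorphism $\rho_j\co\Gamma\to\Gamma_j$, which is surjective since $\pi_1(M_j,p_j)$ is generated by loops of length at most $2\diam(M_j)\le 2$ and all such elements are in the image. Only then does $\rho_j(\Gamma')$ give the desired abelian subgroup of $\Gamma_j$ of index at most $c$ generated by at most $n$ elements. Without this step your argument establishes a property of the limit group only, not of $\pi_1(M_j)$, and the contradiction never materializes.
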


\subsection{Outline of the proof of Theorem~ \ref{main-thm}}

If the theorem fails then we 
take a sequence of counterexamples, with $\epsilon \rightarrow 0$.
The hypotheses imply that there is
a uniform lower Ricci curvature bound.
From a result of Cheeger and Naber, there is an {\em a priori} upper bound on
$\int_M |\Rm_M|^2 \: \dvol_M$ \cite{Ch-Na-codim4}. 
The first part of the
argument for Theorem ~\ref{main-thm} is now standard, based on ideas of
Anderson \cite{An90}, Anderson-Cheeger \cite{AC91} and Bando
\cite{Bando90}.
We can pass to a subsequence that converges, in the Gromov-Hausdorff topology,
to a four dimensional
orbifold $X$ that has a finite number of orbifold singular points, 
and a $C^{1,\alpha}$-regular Riemannian metric $g_X$ on its
regular part $X_{reg}$.
Doing appropriate blowups to zoom in on the formation of singular
points of $X$, one obtains noncompact Ricci-flat ALE orbifolds.  Doing
further blowups to zoom in on the formation of their singular points, 
one obtains a bubble tree whose vertices correspond to 
Ricci-flat orbifolds.  By
assembling the geometric pieces in the bubble tree, one can reconstruct
the diffeomorphism type of the original manifold.

In order to proceed, we need more information about
$X$ and the ALE blowups. 
The assumption that $M$ is spin, with nonzero $\widehat{A}$-genus, helps
in several ways. First,
the nonvanishing of the
$\widehat{A}$-genus of $M$ implies that $M$ has a nonvanishing harmonic spinor
field. We show that these harmonic spinor fields 
pass to a nonzero parallel spinor
field on $X_{reg}$. This forces $X_{reg}$ to be
smooth and Ricci-flat.
We show that $X$ is a Ricci-flat spin orbifold.
The existence of the parallel spinor field now implies that $X$ is 
a hyperK\"ahler orbifold.

The spin assumption on $M$ also implies that the Ricci-flat ALE blowup
orbifolds are spin.
Compatibility of spin structures, along with the existence of
the nonzero parallel spinor field on $X$, means that the 
blowup orbifolds have spinor fields that are
asymptotically parallel at infinity, and nonzero there.
Then a small variation on a result of
Nakajima \cite{Na90} says that the blowup orbifolds are
hyperK\"ahler.  One knows enough about hyperK\"ahler
ALE $4$-manifolds, and $4$-orbifolds, to conclude that after assembling
the pieces in the bubble tree, the result is diffeomorphic to a
compact hyperK\"ahler manifold.  Theorem ~\ref{main-thm} follows
from this fact.

\subsection{Related results}

Anderson gave a Ricci pinching result for Riemannian metrics on
$S^4$ and $\C P^2$ \cite[Theorem 1.3(b)]{An90}.

Regarding Riemannian manifolds with
almost nonnegative Ricci curvature, the following is known.

\begin{thm} \label{extra}
Given $n \in \Z^+$, there are $\eps = \eps(n) >0$ and $J = J(n) < \infty$
with the following property.
Suppose that $(M,g)$ is a closed connected Riemannian $n$-manifold with
$\diam(M)\le 1$ and $\Ric_M \ge - \eps$.
\begin{enumerate}
\item \label{C-nilp} Then $\pi_1(M)$ has a nilpotent subgroup (of 
index at most $J$) with  nilpotency rank at most $n$ ~\cite{KW11}.
(This was also proved in \cite{BGT12} without the index bound.)
\item 
If $M$ is spin and $n$ is divisible by four then 
$|\hat{A}(M)| \le 2^{\frac{n}{2}-1}$
\cite{Ga83}, \cite[p. 86]{Gr82}.
\end{enumerate}
\end{thm}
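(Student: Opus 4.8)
The plan is to argue by contradiction, reduce to the nilpotent case via Theorem~\ref{extra}(\ref{C-nilp}), and then pass to an equivariant Gromov--Hausdorff limit of the universal covers in which the noncollapsing forces the limit space to be genuinely $n$-dimensional with a \emph{discrete} cocompact ``fundamental group'', so that a Bieberbach-type argument applies. Suppose the statement fails for some $n$ and $v$: there is a sequence $(M_i,g_i)$ of closed connected Riemannian $n$-manifolds with $\vol(M_i)\ge v$, $\diam(M_i)\le 1$, $\Ric_{M_i}\ge-\eps_i$, $\eps_i\to 0$, such that $\pi_1(M_i)$ has no abelian subgroup of index $\le i$ generated by $\le n$ elements. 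By Theorem~\ref{extra}(\ref{C-nilp}) there is a nilpotent $N_i\le\pi_1(M_i)$ of index $\le J(n)$, generated by $\le n$ elements, of nilpotency rank $\le n$. Replacing $M_i$ by the (possibly nonnormal) cover with fundamental group $N_i$ multiplies the diameter by at most a constant $c(n)$ while preserving $\vol\ge v$ and $\Ric\ge-\eps_i$; rescaling by $1/c(n)$ restores $\diam\le 1$ at the cost of replacing $v$ and $\eps_i$ by constants of the same type. Since an abelian subgroup of $N_i$ of index $\le i/J(n)$ generated by $\le n$ elements gives one of $\pi_1(M_i)$ of index $\le i$ generated by $\le n$ elements, I may assume from now on that $\pi_1(M_i)=N_i$ is nilpotent, generated by $\le n$ elements, of bounded nilpotency rank, and still fails the conclusion.

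Next I would pass to an equivariant limit of the universal covers $\widetilde M_i\to M_i$, with $N_i$ acting by deck transformations. The projection is a local isometry carrying every unit ball onto all of $M_i$, so $\vol(B_1(\widetilde x))\ge\vol(M_i)\ge v$ for all $\widetilde x$; thus $\{\widetilde M_i\}$ is uniformly noncollapsed with $\Ric\ge-\eps_i$. After passing to a subsequence, $(\widetilde M_i,\widetilde x_i,N_i)\to(\widetilde Y,\widetilde y,G)$ in the pointed equivariant Gromov--Hausdorff topology, where $\widetilde Y$ is a noncollapsed $n$-dimensional Ricci limit space with $\Ric\ge 0$ in the Cheeger--Colding sense, $G\le\Iso(\widetilde Y)$ is closed with $\widetilde Y/G$ of diameter $\le 1$ (the limit of the $M_i$), hence $G$ acts cocompactly, and $\Iso(\widetilde Y)$, so also $G$, is a Lie group by Colding--Naber. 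If $\widetilde Y$ were compact then $\diam(\widetilde M_i)$ would be uniformly bounded, so $|N_i|=\vol(\widetilde M_i)/\vol(M_i)$ would be uniformly bounded by Bishop--Gromov, and the trivial subgroup of $N_i$ would already be abelian of uniformly bounded index generated by $0\le n$ elements, a contradiction for large $i$. So $\widetilde Y$ is noncompact; carrying a cocompact isometry group it contains a line, and by the Cheeger--Colding splitting theorem $\widetilde Y=\R^k\times Z$ isometrically for some $1\le k\le n$, where $Z$ is compact, noncollapsed, $\Ric\ge 0$, and contains no line. The Euclidean factor is canonical --- it is spanned by the lines through a point --- so $\Iso(\widetilde Y)=(O(k)\ltimes\R^k)\times\Iso(Z)$ with $\Iso(Z)$ compact. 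Crucially, by Cheeger--Colding noncollapsing the limit $\widetilde Y/G$ of the $M_i$ has $n$-dimensional Hausdorff measure $\ge v>0$; were $G$ non-discrete, its identity component would act effectively with generically positive-dimensional orbits, forcing that Hausdorff measure to vanish --- a contradiction. Hence $G$ is discrete (and, being generated by the limits of the $\le n$ generators of the $N_i$, finitely generated).

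Then comes the algebra. Let $p\colon G\to O(k)\ltimes\R^k$ be the projection from the splitting of $\Iso(\widetilde Y)$. Because $Z$ is compact, $p(G)$ acts properly on $\R^k$; it acts cocompactly since $G$ does on $\R^k\times Z$. Hence $p(G)$ is a crystallographic group, so by the Bieberbach theorems it is virtually $\Z^k$ with index bounded in terms of $k\le n$ alone. The kernel of $p$ lies in the compact group $\Iso(Z)$ and is discrete, hence finite, of order bounded in terms of $n$ and $v$ by a volume comparison on the noncollapsed space $Z$. Therefore $G$ is (finite)-by-(virtually $\Z^k$), i.e.\ virtually $\Z^k$, for some $k\le n$, with index at most some $I_0=I_0(n,v)$.

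The remaining step, which I expect to be the main obstacle, is to transfer this structure back to the $N_i$. What is needed is a stability statement: since the action is noncollapsed, the deck transformations stay away from the identity, and the $N_i$ are uniformly nilpotent, for all large $i$ there is an epimorphism $N_i\twoheadrightarrow G$ with normal kernel of order bounded uniformly in $i$; this is where the machinery of Kapovitch--Wilking relating manifold groups to their equivariant limit groups enters, and where the reduction to the nilpotent case pays off. Granting it, pulling back a finite-index copy of $\Z^k$ in $G$ along this epimorphism and passing to a further subgroup of bounded index to remove the (uniformly bounded) torsion produces in $N_i$ a copy of $\Z^k$ --- abelian, generated by $k\le n$ elements --- of index $\le I(n,v)$ for all large $i$, contradicting the construction. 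The delicate point is making the equivariant convergence quantitative enough to produce that bounded-kernel epimorphism, and it is there that the dependence of $I$ on the noncollapsing constant $v$ appears.
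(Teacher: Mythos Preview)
There is a mismatch here. Theorem~\ref{extra} is not proved in the paper; it is a citation of known results from \cite{KW11}, \cite{BGT12}, \cite{Ga83}, \cite{Gr82}. Your write-up is not a proof of Theorem~\ref{extra} either --- you \emph{use} part~(\ref{C-nilp}) as a black box and then argue toward the conclusion of Theorem~\ref{thm-lower-ricci-noncol} (the noncollapsed refinement, with the volume lower bound $v$ and an \emph{abelian} subgroup). So the relevant comparison is with Section~\ref{sect3}.

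Against that proof, your architecture is correct --- contradiction, equivariant pointed Gromov--Hausdorff limit of universal covers, Cheeger--Colding splitting $Y\cong\R^m\times Z$, crystallographic structure on the $\R^m$-projection --- but you miss the paper's key device and consequently get stuck at exactly the step you flag as ``the main obstacle''. The paper first chooses a \emph{regular} point $p$ of the base limit $X$ (via \cite{CC97}), so that for some fixed $\eps>0$ every loop at the approximants $p_i\in M_i$ of length $\le\eps/2$ is contractible. This single observation does two jobs at once: it forces $|\gamma|\ge\eps/2$ for every nontrivial $\gamma$ in the limit group $\Gamma$, giving discreteness directly, and it lets one lift a finite presentation of $\Gamma$ to $\Gamma_i$ --- generators lift uniquely to nearby elements of $\Gamma_i$, and each relation, producing a short loop at $p_i$, still holds. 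The result is an epimorphism $\rho_i\colon\Gamma\to\Gamma_i$ (onto, because $\Gamma_i$ is generated by elements of length $\le 2\diam(M_i)\le 2$), and pushing a finite-index $\Z^m\le\Gamma$ forward through $\rho_i$ finishes.

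You instead look for an epimorphism $N_i\twoheadrightarrow G$ with uniformly bounded kernel --- the opposite direction --- and invoke unspecified ``Kapovitch--Wilking machinery'' without carrying it out. That direction is harder and unnecessary; the regular-point trick supplies the map $\Gamma\to\Gamma_i$ for free, and with it your preliminary reduction to nilpotent $\pi_1$ via Theorem~\ref{extra}(\ref{C-nilp}) becomes superfluous. Your Hausdorff-measure argument for discreteness of $G$ is fine in spirit, but it does not by itself deliver the quantitative displacement bound $|\gamma|\ge\eps/2$ that makes the lifting of relations work.
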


Theorem \ref{thm-lower-ricci-noncol}  refines
Theorem \ref{extra}(1) in the noncollapsed case,
although the constants $I$ and $J$ may not be related.

\begin{rmk}
In four dimensions, Rokhlin's theorem says that $\hat{A}(M)$ is
even if $M$ is spin.  Hence under the assumptions of
Theorem \ref{extra}(2), if $n=4$ and $\widehat{A}(M) \neq 0$ then
$|\widehat{A}(M)| = 2$. This means that after possibly reversing
orientation, $M$ is spin-cobordant to the $K3$ surface. 
One can ask whether Theorem \ref{main-thm} still holds without the
upper Ricci curvature bound, say assuming a 
lower Ricci curvature bound and possibly a lower volume bound.

In a somewhat different direction, Cabezas-Rivas and Wilking answered
a question from \cite{Lo00} by showing that a spin manifold with
almost nonnegative sectional curvature has vanishing $\widehat{A}$-genus
\cite{CW}.
\end{rmk}

Regarding almost Ricci-flat $4$-manifolds, Brendle and Kapouleas
found an obstruction to perturbing an
almost Ricci-flat metric to a Ricci-flat metric
\cite{BK}. In their example, the relevant limit space was a flat
$4$-dimensional orbifold. A gluing obstruction for a
nonflat limit space was found by Biquard \cite{Bi13}.

Finally, one can ask whether 
Corollary \ref{cor-large-pi1}(1) holds without
the upper Ricci curvature bound.

\subsection{Structure of the paper}

In Section \ref{sect1} we prove
Theorem~\ref{main-thm}. In Section \ref{sect2} we prove 
Theorem~\ref{main-large-pi1}
and Corollary~\ref{cor-large-pi1}. 
In Section \ref{sect3} we prove Theorem~\ref{thm-lower-ricci-noncol}.

We thank the participants of the MSRI Spring 2016 geometry program for helpful
discussions, especially Olivier Biquard, Ronan Conlon and Jeff Viaclovsky.
We also thank Jeff for comments on an earlier version of the paper.
We thank the referee for pointing out  \cite{BGR07}.

\section{Proof of Theorem ~\ref{main-thm}} \label{sect1}

Arguing by contradiction, if Theorem ~\ref{main-thm} is not true then there is
a sequence $\{(M_i, g_i)\}_{i=1}^\infty$ of closed connected Riemannian spin $4$-manifolds with
\begin{itemize}
\item $\widehat{A}(M_i) \neq 0$,
\item $\vol(M_i) \ge v$,
\item $\diam(M_i) \le 1$,
\item $\Ric_{M_i} \le \Upsilon g_i$ and
\item $S_{M_i} \ge - \frac{1}{i}$, but
\item $M_i$ is not diffeomorphic to a $K3$ surface.
\end{itemize}

\subsection{Harmonic spinors on $M_i$} \label{subsect1}
By assumption, the second Stiefel-Whitney class of $M_i$ vanishes in
$H^2(M_i; \Z_2)$.
After reversing orientation if necessary, we can assume that 
$\widehat{A}(M_i) > 0$. We can then choose a spin structure on $M_i$
that is compatible
with this orientation.
The Atiyah-Singer index theorem implies that 
$M_i$ has a nonzero harmonic spinor $\phi_i$ of positive chirality. 
By rescaling, we can
assume that $\| \phi_i \|_2 = 1$. Letting $D$ denote the Dirac operator,
the Lichnerowicz formula gives
\begin{equation} \label{2.1}
0 = \int_{M_i} |D\phi_i|^2 \: \dvol_{M_i} =  \int_{M_i} 
\left( |\nabla \phi_i|^2+\frac{S_{M_i}}{4} |\phi_i|^2 \right) \: \dvol_{M_i}.
\end{equation}
Hence 
\begin{equation} \label{2.2}
 \int_{M_i} |\nabla \phi_i|^2 \: \dvol_{M_i} \le \frac{1}{4i}.
\end{equation}
As 
\begin{equation} \label{2.3}
\nabla |\phi_i| = \nabla \sqrt{\langle \phi_i, \phi_i \rangle} =
\frac{
\langle \nabla \phi_i, \phi_i \rangle + \langle \phi_i,  \nabla \phi_i \rangle
}{
2 |\phi_i| 
}
\end{equation}
away from the zero-locus of $\phi_i$, we have
\begin{equation} \label{2.4}
|\nabla |\phi_i|| \le |\nabla \phi_i|
\end{equation}
and so 
\begin{equation} \label{2.5}
\int_{M_i} |\nabla |\phi_i||^2 \: \dvol_{M_i} \le \frac{1}{4i}.
\end{equation}

Let $\lambda_i > 0$ denote the Poincar\'e constant of $(M_i, g_i)$, so that
\begin{equation} \label{2.6}
\int_{M_i} |\nabla F|^2 \: \dvol_{M_i} \ge \lambda_i
\int_{M_i} (F - \overline{F})^2 \: \dvol_{M_i}
\end{equation}
for all $F \in H^1(M_i)$, where $\overline{F}$ denotes the average value
$\overline{F} = \frac{1}{\vol(M_i)} \int_{M_i} F \: \dvol_{M_i}$.
Then
\begin{equation} \label{2.7}
\int_{M_i} \left( |\phi_i| - \overline{|\phi_i|} \right)^2 \: \dvol_{M_i} 
\le \frac{1}{4 i \lambda_i}.
\end{equation}

\begin{lem} \label{2.8}
If $V_i$ is open in $M_i$ then
\begin{equation} \label{2.9}
\int_{V_i} |\phi_i|^2 \: \dvol_{M_i} \le
\left(
\frac{1}{ \sqrt{4 i \lambda_i}}
 + \sqrt{ \frac{\vol(V_i)}
{\vol(M_i)}} \right)^2.
\end{equation}
\end{lem}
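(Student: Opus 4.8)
The plan is to bound $\int_{V_i} |\phi_i|^2$ by comparing $|\phi_i|$ to its average $\overline{|\phi_i|}$, for which we already have the $L^2$ estimate \eqref{2.7}, and then controlling the contribution of the constant $\overline{|\phi_i|}$ on $V_i$ by the volume ratio. Concretely, on $V_i$ write $|\phi_i| = \left( |\phi_i| - \overline{|\phi_i|} \right) + \overline{|\phi_i|}$ and apply the triangle inequality in $L^2(V_i)$:
\begin{equation} \label{proof-2.9a}
\| \phi_i \|_{L^2(V_i)} \le \left\| |\phi_i| - \overline{|\phi_i|} \right\|_{L^2(V_i)} + \overline{|\phi_i|} \cdot \vol(V_i)^{1/2}.
\end{equation}
The first term on the right is at most $\left\| |\phi_i| - \overline{|\phi_i|} \right\|_{L^2(M_i)} \le \frac{1}{\sqrt{4 i \lambda_i}}$ by \eqref{2.7}, since $V_i \subseteq M_i$ and the integrand is nonnegative.

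For the second term, I would bound $\overline{|\phi_i|}$ using $\| \phi_i \|_2 = 1$. By Cauchy--Schwarz (or Jensen's inequality applied to $t \mapsto t^2$),
\begin{equation} \label{proof-2.9b}
\overline{|\phi_i|} = \frac{1}{\vol(M_i)} \int_{M_i} |\phi_i| \: \dvol_{M_i} \le \left( \frac{1}{\vol(M_i)} \int_{M_i} |\phi_i|^2 \: \dvol_{M_i} \right)^{1/2} = \frac{1}{\vol(M_i)^{1/2}}.
\end{equation}
Substituting \eqref{proof-2.9b} into \eqref{proof-2.9a} gives
\begin{equation} \label{proof-2.9c}
\| \phi_i \|_{L^2(V_i)} \le \frac{1}{\sqrt{4 i \lambda_i}} + \frac{\vol(V_i)^{1/2}}{\vol(M_i)^{1/2}} = \frac{1}{\sqrt{4 i \lambda_i}} + \sqrt{\frac{\vol(V_i)}{\vol(M_i)}},
\end{equation}
and squaring both sides yields \eqref{2.9}.

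The only genuinely delicate point is that the inequality $|\nabla |\phi_i|| \le |\nabla \phi_i|$, and hence the Poincaré inequality application \eqref{2.6}--\eqref{2.7}, must be justified across the zero locus of $\phi_i$; but this is already handled in the lead-up to \eqref{2.7} via a standard Kato-type argument (one works with $\sqrt{|\phi_i|^2 + \delta^2}$ and lets $\delta \to 0$, noting $|\phi_i| \in H^1(M_i)$). Given that, the proof above is entirely elementary, and I do not anticipate any substantive obstacle — it is just the triangle inequality together with Cauchy--Schwarz. One small bookkeeping care: $\overline{|\phi_i|}$ refers to the average over all of $M_i$, not over $V_i$, which is exactly what makes \eqref{proof-2.9b} available.
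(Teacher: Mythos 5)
Your proof is correct and is essentially identical to the paper's: the authors also decompose $f = (f-\overline{f}) + \overline{f}$, apply the triangle inequality in $L^2(V)$, and bound $|\overline{f}| \le \mu(M)^{-1/2}$ by Cauchy--Schwarz (they merely phrase it for a general finite measure space and a unit-norm $f$). Your closing remark about the Kato inequality concerns the derivation of \eqref{2.7}, not the lemma itself, and is consistent with what the paper does.
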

\begin{proof}
We work more generally with a measure space $(M, d\mu)$ having finite mass,
a measurable subset $V \subset M$ and a function $f \in L^2(M, d\mu)$ with
$\| f \|_{L^2(M)} = 1$. By the Cauchy-Schwarz inequality,
$| \overline{f} | \le \frac{1}{\sqrt{\mu(M)}}$. Then
\begin{equation} \label{2.10}
\|f\|_{L^2(V)} \le \|f - \overline{f} \|_{L^2(V)} + \|\overline{f}\|_{L^2(V)}
\le  \|f - \overline{f} \|_{L^2(M)} + \sqrt{\frac{\mu(V)}{\mu(M)}},
\end{equation}
from which the lemma follows.
\end{proof}

\subsection{Parallel spinors on $X$} \label{subsect2}

From the scalar curvature condition and the upper bound on Ricci curvature,
for large $i$ we have $\Ric_{M_i} \ge - 10 \Upsilon g_i$.
Then from \cite[Theorem 1.13]{Ch-Na-codim4},
there is a uniform upper bound on
$\int_{M_i} |\Rm_{M_i}|^2 \: \dvol_{M_i}$. 
By \cite[Theorem 2.6 and Pf. of Main Lemma 2.2]{An90}, 
after passing to a subsequence we can assume that 
$\lim_{i \rightarrow \infty} (M_i, g_i) = (X, g_X)$ in the Gromov-Hausdorff topology, where
{
\begin{itemize}
\item $X$ is a four dimensional compact orbifold with
finitely many isolated orbifold singularities, 
\item $g_X$ is a continuous orbifold Riemannian metric on $X$, and
\item $g_X$ is locally $W^{2,p}$-regular away from the singular points, 
for all $p < \infty$.
\end{itemize}
Let $X_{reg}$ denote the regular part of the orbifold $X$, i.e.
the complement of the finitely many singular points $X_{sing}$.
It is a $W^{3,p}$-manifold, for all $p < \infty$, so we can assume that it is
equipped with an underlying smooth structure (although with a
Riemannian metric that is $W^{2,p}$-regular in that smooth
structure). Then $X_{reg}$ is also a $C^{2,\alpha}$-manifold  and $g_X$ is
locally $C^{1,\alpha}$-regular, for all $\alpha \in (0,1)$.
}

For large $j$,
let $U_j$ be the union of the $\frac{1}{j}$-balls around the singular
points in $X$. Note that $X- U_j$ and $M_i$ are both smooth.
For large $i$, there is a smooth map
$\sigma_{j,i} : (X - U_j) \rightarrow M_i$ that is a diffeomorphism onto its
image, so that $\lim_{i \rightarrow \infty} \sigma_{j,i}^* g_i = g_{X- U_j}$
in the $C^{1,\alpha}$-topology. In particular, 
$X - U_j$ admits a spin structure. 
As $H^1(X - U_j; \Z_2)$ is finite,
after passing to a subsequence of $i$'s, we can assume that
each $\phi_{j,i}$ is spin compatible.
From (\ref{2.2}),
\begin{equation} \label{2.11}
\int_{\sigma_{j,i}(X-U_j)} |\nabla \phi_i|^2 \: \dvol_{M_i} \le \frac{1}{4i}.
\end{equation}
From Lemma \ref{2.8},
\begin{align} \label{2.12}
& \int_{\sigma_{j,i}(X-U_j)} |\phi_i|^2 \: \dvol_{M_i} \ge \\
& 1 - \left(
\frac{1}{ \sqrt{4 i \lambda_i}}
 + \sqrt{ \frac{\vol(M_i - \sigma_{j,i}(X-U_j))}
{\vol(M_i)}} \right)^2. \notag
\end{align}

It makes sense to compare spinor fields on
two diffeomorphic Riemannian manifolds (c.f. \cite[p. 531-532]{Lo00}), 
so we can consider $\sigma_{j,i}^* \phi_i$, a spinor field on $X - U_j$.
The $H^1$-norm on spinor fields over $X-U_j$ is
\begin{equation} \label{2.13}
\| \psi \|^2_{H^1(X - U_j)} =
\int_{X-U_j} \left( |\psi|^2 + |\nabla \psi|^2 \right) \: \dvol_{X-U_j}.
\end{equation}
{
Note that the Christoffel symbols on $X- U_j$ are locally $C^\alpha$-regular
and locally $W^{1,p}$-regular.
}
From (\ref{2.11}) and the normalization of $\phi_i$, the $H^1$-norms of
$\{ \sigma_{j,i}^* \phi_i \}_{i=1}^\infty$ are uniformly bounded, so
we can take a subsequence that converges weakly in $H^1$ to some
positive chirality
spinor field $\psi_j$ on $X - U_j$. By Rellich compactness,
after passing to a further subsequence
we can assume that $\lim_{i \rightarrow \infty} \sigma_{j,i}^* \phi_i
= \psi_j$ in $L^2$. In particular,
\begin{align} \label{2.14}
\|\psi_j\|^2_{L^2(X-U_j)} = & 
\lim_{i \rightarrow \infty} \| \sigma_{j,i}^* \phi_i \|^2_{L^2(X-U_j)} 
= \lim_{i \rightarrow \infty} \| \phi_i \|^2_{L^2(\sigma_{j,i}(X-U_j))} \\
= & \lim_{i \rightarrow \infty} \int_{\sigma_{j,i}(X-U_j)} |\phi_i|^2 \:
\dvol_{M_i}. \notag
\end{align}
As norms can only decrease when taking weak limits, using (\ref{2.11}) we have
\begin{align} \label{2.15}
\|\psi_j\|^2_{H^1(X-U_j)} \le &
\liminf_{i \rightarrow \infty} \| \sigma_{j,i}^* \phi_i \|^2_{H^1(X-U_j)} 
= \liminf_{i \rightarrow \infty} \| \phi_i \|^2_{H^1(\sigma_{j,i}(X-U_j))} \\
= & \liminf_{i \rightarrow \infty} \int_{\sigma_{j,i}(X-U_j)} \left(
|\nabla \phi_i|^2 + |\phi_i|^2 \right) \:
\dvol_{M_i} \notag \\
= & \lim_{i \rightarrow \infty} \int_{\sigma_{j,i}(X-U_j)} |\phi_i|^2 \:
\dvol_{M_i}. \notag
\end{align}
Thus $\|\psi_j\|_{H^1(X-U_j)} = \|\psi_j\|_{L^2(X-U_j)}$, so
$\nabla \psi_j$ vanishes weakly. 
 
There is a uniform positive lower bound on $\lambda_i$ in terms of the
upper diameter bound and the lower Ricci bound; see \cite{BQ00} and
references therein. We have convergence 
$\lim_{i \rightarrow \infty} (M_i, g_{M_i}, \dvol_{M_i}) \rightarrow
(X, g_X, \dvol_X)$ in the measured Gromov-Hausdorff topology. Then
using (\ref{2.12}), we find
\begin{equation} \label{2.16}
1 - \frac{\vol(U_j)}{\vol(X)} \le \|\psi_j\|^2_{L^2(X-U_j)} \le 1. 
\end{equation}

The preceding construction of $\psi_j$ was for a fixed but sufficiently
large $j$. 
We can take $j \rightarrow \infty$, and apply a diagonal
argument in $j$ and $i$,
to obtain a weakly parallel 
positive chirality
spinor field $\psi_\infty$ on the spin manifold $X_{reg}$,
with $\| \psi_\infty \|_{L^2(X_{reg})} = 1$.

In a coordinate chart and using an orthonormal frame $\{e_a\}_{a=1}^4$, 
the fact that $\psi_\infty$ is weakly parallel means that
\begin{equation} \label{parallel}
\partial_k \psi_\infty = -  \frac{1}{8} \sum_{a,b=1}^4
\Gamma_{abk} [\gamma^a, \gamma^b] \psi_\infty
\end{equation}
holds weakly, where $\{\gamma^a \}_{a=1}^4$ are the Dirac matrices.
We know that $\psi_\infty$ is $W^{1,2}$-regular.
As $\Gamma$ is $W^{1,p}$-regular for all $p < \infty$, using
the Sobolev embedding theorem and bootstrapping, one finds that
$\psi_\infty$ is locally $W^{2,p}$-regular for all $p < \infty$. In particular,
$\psi_\infty$ is locally $C^{1,\alpha}$-regular for all $\alpha \in (0,1)$.
Hence $\psi_\infty$ satisfies (\ref{parallel}) in the classical sense.
Then for any smooth vector field $V$ on $X_{reg}$, we have
\begin{equation}
V \langle \psi_\infty, \psi_\infty \rangle =
\langle \nabla_V \psi_\infty, \psi_\infty \rangle + 
\langle \psi_\infty, \nabla_V \psi_\infty \rangle = 0,  
\end{equation}
showing that $|\psi_\infty|$ is a (nonzero) constant.

Since $\psi_\infty$ is parallel and locally $W^{2,p}$-regular,
for smooth vector fields $V$ and $W$ on $X_{reg}$ we have
\begin{equation}
0 = \nabla_V \nabla_W \psi_\infty - \nabla_W \nabla_V \psi_\infty -
\nabla_{[V,W]} \psi_\infty = R(V,W) \psi_\infty 
\end{equation}
in $L_{loc}^p$, for all $p < \infty$. 
Along with the nowhere-vanishing of $\psi_\infty$, 
this implies algebraically that $\Ric_X = 0$
\cite[Corollary 2.8]{BHMMM15}.

We recall that the $W^{2,p}$-regularity of $g_X$ around $x \in X_{reg}$ 
is derived using coordinates that are constructed by starting with
harmonic coordinates around points $m_i \in M_i$,
with $\lim_{i \rightarrow \infty} m_i = x$ and passing to the limit
\cite[Pf. of Main Lemma 2.2]{An90}. Hence
the formula for $\Ric$ in harmonic coordinates
\cite[(2.7)]{An90} still holds weakly in the
coordinates around $x \in X$. By elliptic regularity, the
vanishing of $\Ric_X$ implies that $g_X$ is smooth on $X_{reg}$,
relative to the smooth structure defined by these coordinates.
Being parallel, $\psi_\infty$ is also smooth.

From \cite[Theorem 1]{BGR07},
the spin structure on $X_{reg}$ extends to an orbifold
spin structure on $X$. 
By removable singularity results for Einstein metrics, $g_X$ is a smooth Ricci-flat
orbifold Riemannian metric on $X$; c.f. ~\cite[\S 5]{BKN89}.

Given 
$p \in X_{sing}$, let $(U, q, \Gamma_p)$ be an orbifold
chart for a neighborhood of $p$. Here $U$ is a ball in $\R^4$ around the
origin $q$, and $\Gamma_p$ is the local group of $p$. The lift of
$\psi_\infty$ is a parallel spinor field $\hat{\psi}_\infty$ on $U-q$. 
Since $U-q$ is simply connected, $\hat{\psi}_\infty$ has a unique extension
to a smooth parallel spinor field over $U$. Hence $\psi_\infty$ extends
uniquely to a nonzero 
positive chirality
parallel spinor field on $X$. 
Given  $x \in X_{reg}$, let $\gamma$ be a special loop at $x$ in the sense of
\cite[Chapter 2.2]{KL14}. Identifying the oriented isometry group of
$T_x X$ with $SO(4)$, from the decomposition $Spin(4) = SU(2) \times SU(2)$
it follows that the holonomies around such $\gamma$'s
lie in one of the $SU(2)$-factors.
(As the parallel spinor field has positive chirality, our conventions are
that the $SU(2)$-factor is the second factor.)
That is, $X$ acquires the structure of a hyperK\"ahler orbifold.
In particular, if $p \in X_{sing}$ then we can take the orbifold chart
$(U, q, \Gamma_p)$ to have $U$ an open ball in $\C^2$ with origin $q$ and
$\Gamma_p$ a finite subgroup of $SU(2)$ that acts freely on $T_q^1 U
\cong S^3$.

\subsection{ALE blowups} \label{subsect3}

If $p \in X_{sing}$ then there are points $p_i \in M_i$ so that
$\lim_{i \rightarrow \infty} (M_i, p_i) = (X, p)$ in the pointed
Gromov-Hausdorff topology. After passing to a subsequence, from
\cite[Remark 3.1]{AC91} and \cite[Proposition 2]{Bando90}
there is an appropriate sequence $\{\delta_i\}_{i=1}^\infty$ 
of positive numbers with
$\lim_{i \rightarrow \infty} \delta_i = 0$ so that
$\lim_{i \rightarrow \infty} \left( M_i, \frac{1}{\delta_i^2} g_{M_i}, 
p_i \right) = (Y, g_Y, y_0)$ in the pointed Gromov-Hausdorff topology,
where $Y$ is an nonflat Ricci-flat
ALE orbifold with finitely many orbifold singular points. 
The decay rate of $Y$ is order-$4$ in the terminology of
\cite{BKN89}; see \cite[Proposition 2]{Bando90} and \cite[Theorem 1.5]{BKN89}.
For any small $\epsilon > 0$, the complement $C_\epsilon$ of the 
$\epsilon$-neighborhood of $Y_{sing}$ embeds in $M_i$ for large $i$.
In particular, $C_\epsilon$ admits a spin structure.  Since 
$H^1(C_\epsilon; \Z_2)$ is finite, after passing to a subsequence 
we can assume that the embeddings are spin compatible. Taking $\epsilon$ going 
to zero, $Y_{reg}$ acquires a spin structure.
From \cite[Theorem 1]{BGR07},
the orbifold $Y$ is a spin orbifold. 
Its tangent cone at infinity is $\C^2/\Gamma_p$.

\begin{lem} \label{2.17}
$Y$ has a nonzero parallel spinor field.
\end{lem}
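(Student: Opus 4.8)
The plan is to mimic, at the level of the ALE orbifold $Y$, the argument used on $X$ in Section \ref{subsect2}, transporting the nonzero parallel spinor field $\psi_\infty$ on $X$ to a nonzero parallel spinor field on $Y$ via the blowup maps. First I would recall the setup: we have the rescaled metrics $\frac{1}{\delta_i^2}g_{M_i}$ converging to $(Y,g_Y,y_0)$, and harmonic spinors $\phi_i$ on $M_i$. The key geometric input is that on the blowup scale the spinor fields become asymptotically parallel: since $\psi_\infty$ on $X$ is \emph{parallel} and $|\psi_\infty|$ is a nonzero constant, the formation of the orbifold point $p\in X_{sing}$ happens at the scale $\delta_i$, and zooming in, the spinor fields $\phi_i$ (suitably pulled back via the embeddings $C_\epsilon\hookrightarrow M_i$ and normalized) should converge on compact subsets of $Y_{reg}$ to a spinor field $\psi^Y$ on $Y_{reg}$. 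Because $\int_{M_i}|\nabla\phi_i|^2\,\dvol_{M_i}\le\frac{1}{4i}\to 0$ in the original scale, and the Dirac/connection Laplacian estimates are scale-invariant in the appropriate sense in dimension four, the rescaled Dirichlet energy of the pulled-back spinors over any fixed compact piece of $Y_{reg}$ still goes to zero; hence the limit $\psi^Y$ is weakly parallel on $Y_{reg}$.

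Next I would argue that $\psi^Y$ is nonzero. This is the crux: the $L^2$-norm of $\phi_i$ could a priori concentrate entirely on $X$ and contribute nothing at the blowup scale. Here one uses that $|\psi_\infty|$ is a nonzero constant on $X_{reg}$, so $\psi_\infty$ does not vanish near the singular point $p$; transplanting back, $|\phi_i|$ is bounded below by a positive constant on annular regions around $p_i$ at scales comparable to $\delta_i$ and larger (using $C^{1,\alpha}$-convergence of $\sigma_{j,i}^*g_i$ and the $L^2$-convergence $\sigma_{j,i}^*\phi_i\to\psi_j$). Since the annulus $\{ \delta_i \le d(\cdot,p_i)\le C\delta_i\}$ in the rescaled metric is a fixed-size annulus in $Y$ on which $|\psi^Y|$ must therefore be positive, $\psi^Y$ is not identically zero. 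Now the same Bochner/elliptic-regularity chain used for $\psi_\infty$ applies verbatim on $Y_{reg}$: weakly parallel plus $W^{1,p}$-regular Christoffel symbols gives, after bootstrapping, that $\psi^Y$ is $C^{1,\alpha}$ and hence classically parallel, so $|\psi^Y|$ is a nonzero constant; the curvature identity $R(V,W)\psi^Y=0$ together with nonvanishing forces $\Ric_Y=0$ (already known, $Y$ is Ricci-flat), and by the removable-singularity and spin-orbifold results (\cite[Theorem 1]{BGR07}) $\psi^Y$ extends over $Y_{sing}$ to a nonzero parallel spinor on all of $Y$, exactly as for $X$.

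Two technical points need care but are routine given what the paper has set up. One is the compatibility of spin structures along the chain $M_i \supset C_\epsilon \cong$ (neighborhood in $Y$), which is handled as before by finiteness of $H^1(C_\epsilon;\Z_2)$ and passing to a subsequence of $i$'s so the embeddings are spin compatible; since $Y$ is already shown to be a spin orbifold this is consistent. The other is that the convergence $\sigma^*(\text{rescaled }\phi_i)\to\psi^Y$ should be taken in $H^1_{loc}$ on $Y_{reg}$, using the same weak-$H^1$ plus Rellich argument as in \eqref{2.13}--\eqref{2.15}, with a diagonal argument over an exhaustion of $Y_{reg}$ and over $\epsilon\to 0$; the uniform $H^1_{loc}$ bound comes from the scale-invariance of the energy $\int|\nabla\phi_i|^2$ in dimension four and the fact that the pointwise norm $|\phi_i|$ stays bounded (it is a constant-plus-small-error by \eqref{2.7}).

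The main obstacle I expect is the nonvanishing of the limit spinor on $Y$ — ruling out that all of the ``spinor mass'' escapes to $X$ and none survives at the bubble scale. The way I would close this is precisely through the \emph{constancy} of $|\psi_\infty|$: it pins down a definite, scale-independent lower bound for $|\phi_i|$ on the necks joining $M_i$'s bulk to the bubble, so the bubble necessarily inherits a nontrivial asymptotically-parallel spinor. This is the ``asymptotically parallel at infinity, and nonzero there'' phenomenon flagged in the introduction, and it is the one place where the global structure of $X$ (not just local analysis on $Y$) is genuinely used.
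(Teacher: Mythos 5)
Your approach is genuinely different from the paper's, and it contains a gap that I don't think can be repaired as stated. The pivotal claim is that ``the Dirac/connection Laplacian estimates are scale-invariant in the appropriate sense in dimension four,'' so that the rescaled Dirichlet energy of the spinors over fixed compact pieces of $Y_{reg}$ still tends to zero, whence the limit is weakly parallel. This is false: under the blowup $g_i \mapsto \delta_i^{-2} g_i$ the pointwise quantity $|\nabla\phi|^2$ scales by $\delta_i^{2}$ while $\dvol$ scales by $\delta_i^{-4}$, so $\int |\nabla\phi_i|^2\,\dvol$ scales by $\delta_i^{-2}$ (the scale-invariant energy in dimension four is $\int|\Rm|^2\,\dvol$, not a first-order Dirichlet energy). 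The bound you inherit from \eqref{2.2} is therefore $\delta_i^{-2}/(4i)$, and since $\delta_i$ is dictated by the curvature concentration scale and bears no a priori relation to $i$, this need not be bounded, let alone tend to zero. You can still extract a limit spinor $\psi^Y$ on $Y_{reg}$ — not from your $H^1$ bound but from interior elliptic estimates for the (scale-covariant) equation $D\phi_i=0$ together with the local $L^2$ bounds — but that limit is then only \emph{harmonic}, not parallel. Your nonvanishing step has a secondary gap of the same flavor: the $L^2$ convergence $\sigma_{j,i}^*\phi_i\to\psi_j$ lives at fixed distance from $X_{sing}$, and propagating a pointwise lower bound on $|\phi_i|$ down through the neck to scale $\delta_i$ requires a separate argument.

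The paper's proof sidesteps all of this by never transporting $\phi_i$ to the bubble. The parallel spinor $\psi_\infty$ on $X$ is used only to identify the $\Gamma_p$-action on $S^3$ as a ``right action'' in Nakajima's sense, which guarantees that the flat cone $\C^2/\Gamma_p$ at infinity of $Y$ carries a parallel spinor of positive chirality. One then runs Witten's positive-mass scheme directly on $Y$: build $\eta_0$ asymptotically parallel of norm one at infinity, correct it to a harmonic spinor $\eta$ with the same asymptotics using weighted elliptic theory, and apply the Lichnerowicz formula with integration by parts to get $m=\const\int_Y|\nabla\eta|^2\,\dvol_Y$; the order-$4$ decay of $Y$ forces $m=0$, hence $\nabla\eta=0$. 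Your own argument, once the compactness is repaired via harmonicity, would still need exactly this mass identity to upgrade ``harmonic and asymptotically parallel'' to ``parallel,'' so the positive-mass step is not optional in either route.
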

\begin{proof}
The proof is essentially the same as that of \cite[Corollary 3.4]{Na90}, which
treats the case when $Y$ is a manifold, with only minor changes.
To make this clear, we outline the steps of the proof.
From the existence of the positive chirality parallel spinor 
$\psi_\infty$ in a neighborhood of $p \in X$, the action of $\Gamma_p$ on
$S^3$ is a right action in the sense of \cite[p. 390]{Na90}.
As in Witten's proof of the positive mass theorem, one constructs
a positive chirality
spinor field $\eta_0$ on $Y$ that is asymptotically parallel at infinity,
with norm approaching one.  Then one puts $\eta = \eta_0 -
D G D \eta$, where $D$ is the Dirac operator and $G$ is the 
inverse of the (invertible) operator $D^2$, when considered as an operator
between appropriate weighted function spaces. The spinor field
$\eta$ is harmonic and has the same asymptotics as $\eta_0$.
This linear analysis extends with only trivial change to the
orbifold case.  

Integration by parts, and the fact that $Y$ is scalar-flat, give
\begin{equation}
m = \const \int_Y |\nabla \eta|^2 \: \dvol_Y,
\end{equation}
where $m$ is the ADM mass, as defined using integration over large
distance spheres in $Y$.  The order-$4$ decay rate of $Y$ implies that
$m = 0$. Hence 
the positive chirality spinor field
$\eta$ is parallel.

\end{proof}

Thus $Y$ is also a hyperK\"ahler orbifold. 

\subsection{Bubble tree} \label{subsect4}

We can repeat the blowup analysis
at a point of $Y_{sing}$,
using the fact that $Y$ has a nonzero parallel spinor field
of positive chirality.
The result is that we get a bubble tree $T$,
as in \cite{AC91,Bando90}. (The papers \cite{Bando90,Bando90-2} treat
the case when the initial manifold is Einstein, while \cite{AC91} treats
the more general case of bounded Ricci curvature.) This is
a finite directed rooted tree, 
with a connected orbifold associated to each vertex. 
The orbifold associated to the root vertex is $X$. The orbifolds associated
to the other vertices $v$ are Ricci-flat ALE orbifolds $W_v$. 
The edges of $T$ 
point inward toward the root vertex. Given a
vertex $v \in T$, the edges with terminus $v$ are in
bijective correspondence with $W_{v,sing}$. The initial vertex $v^\prime$
of such
an edge $e$ is the result of the blowup analysis at the corresponding 
point $w \in W_{v,sing}$. If $\Gamma_{w}$ is the local group of
$w$ then the asymptotic cone of $W_{v^\prime}$ is $\C^2/\Gamma_w$.
The finiteness of $T$ comes from the uniform upper bound on
$\int_{M_i} |\Rm_{M_i}|^2 \: \dvol_{M_i}$, since each blowup orbifold has
a definite amount of $\int |\Rm|^2 \: \dvol$.

Given $k \ge 0$, let $T_k$ be the vertices of distance $k$ from the
root vertex.  Let $N$ be the largest $k$ for which $T_k \neq \emptyset$,
which we will call the height of $T$.
The orbifolds associated to vertices in $T_N$ are manifolds.

From $T$ we can construct a compact smooth manifold $M_T$ that is
diffeomorphic to $M_i$, for large $i$. To describe $M_T$, first
consider the case when $N=0$. 
Then $X$ is a smooth manifold and $M_T = X$.
If $N=1$ then $T$ consists of the root vertex along with 
vertices in $T_1$. The only edges in $T$ 
join vertices in
$T_1$ to the root vertex. 
Given $x \in X_{sing}$, a small neighborhood $O_x$ of $x$ is 
orbifold-diffeomorphic to a finite cone over the space form
$S^3/\Gamma_x$. If $v^\prime \in T_1$
is the initial vertex of the edge corresponding to $x$ then  its associated
orbifold $W_{v^\prime}$ 
has asymptotic cone $\C^2/\Gamma_x$. Let $P_x$ be a truncation
of $W_{v^\prime}$
whose boundary is a copy of $S^3/\Gamma_x$ at large distance.
We remove $O_x$ from $X$ and glue in a rescaled copy of $P_x$.
Doing this for all $x \in X_{sing}$ gives $M_T$.
Given the combinatorics of the matchings,
the gluing process is unique up to isotopy,
because of the existence of the small-scale asymptotics near $x$ and
the large-scale asymptotics of $W_{v^\prime}$.

For $N > 1$, we do the same gluing procedure inductively.  That is,
given a vertex $v \in T_{N-1}$, if there are no edges terminating at
$v$ then the orbifold associated to $v$ is a manifold and we leave it alone.
If there are edges terminating at $v$ then the orbifold associated to
$v$ has singular points.
We remove neighborhoods of the singular
points and glue in truncated manifolds associated to the vertices 
$v^\prime \in T_N$ that are joined to $v$ in $T$. Doing this for all
$v \in T_{N-1}$, the result is a tree of height $N-1$ for which
the orbifolds associated to the vertices of distance $N-1$ from the
root vertex are all noncompact manifolds.
Then we iterate downward in the height of the tree, until
we finish with $M_T$.

From the uniqueness of the gluing procedure, up to isotopy, if we
know the orbifolds associated to the vertices of $T$
and the combinatorics of the matchings
then we
can uniquely determine the diffeomorphism type of $M_T$.

\subsection{End of the proof} \label{subsect5}

In the previous subsection, we did not make reference to the hyperK\"ahler
structures.  In our situation, all of the orbifolds associated to the
vertices of $T$ carry hyperK\"ahler structures.

\begin{lem} \label{2.18}
For large $i$, the manifold $M_i$ is diffeomorphic to a hyperK\"ahler
manifold.
\end{lem}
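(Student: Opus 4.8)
The plan is to assemble the bubble tree $T$ with its hyperK\"ahler pieces and show that the glued manifold $M_T$—which by the previous subsection is diffeomorphic to $M_i$ for large $i$—is diffeomorphic to a compact hyperK\"ahler manifold. The key input is the classification of hyperK\"ahler ALE $4$-orbifolds: by the work of Kronheimer (in the manifold case) and its orbifold extensions, a hyperK\"ahler ALE $4$-orbifold $W_v$ with asymptotic cone $\C^2/\Gamma$ is, as a smooth orbifold, determined by $\Gamma$ (it is diffeomorphic to a partial or full minimal resolution of $\C^2/\Gamma$, possibly with remaining orbifold singularities corresponding to the descendant vertices). The root orbifold $X$ is a hyperK\"ahler $4$-orbifold with isolated $\C^2/\Gamma_p$ singularities whose global desingularization-by-gluing is known to be diffeomorphic to a $K3$ surface or a flat manifold—but since $M_i$ is spin with $\hat A(M_i) = \hat A(M_T) \ne 0$, the flat case is excluded, so the assembled object must be $K3$.

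More precisely, I would argue as follows. First, since each $W_v$ (for $v \ne$ root) is a hyperK\"ahler ALE $4$-orbifold, it admits a simultaneous hyperK\"ahler resolution $\widehat{W}_v$: one can deform/resolve all of its orbifold points hyperK\"ahler-ly, using the Kronheimer construction on each local group, to obtain a smooth hyperK\"ahler ALE $4$-manifold asymptotic to the same $\C^2/\Gamma$. Crucially, the \emph{diffeomorphism type of the glued manifold does not change} under this replacement: resolving an orbifold point of $W_v$ and then gluing in the descendant truncated piece $P_{v'}$ produces the same smooth manifold, up to isotopy, as resolving $W_{v'}$ inside it—this is because both are minimal resolutions of the same local singularity $\C^2/\Gamma_w$, and by Kronheimer's uniqueness such resolutions are diffeomorphic rel boundary. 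Inductively collapsing the tree from the leaves $T_N$ upward, I replace the whole tree $T$ by a tree $T'$ of height $\le 1$ whose only nontrivial pieces, glued into $X_{sing}$, are \emph{smooth} hyperK\"ahler ALE manifolds resolving the $\C^2/\Gamma_p$'s, without changing the diffeomorphism type of $M_T$. Now $M_T$ is the gluing of $X$ (a compact hyperK\"ahler $4$-orbifold) with smooth hyperK\"ahler ALE resolutions at each singular point—this is exactly the setup of the global hyperK\"ahler desingularization of $X$, so by the Kobayashi/BKN (and subsequent) theory $M_T$ is diffeomorphic to a compact hyperK\"ahler manifold, hence to $K3$ or a flat torus quotient admitting such a structure. (In fact a simply-connected compact hyperK\"ahler $4$-manifold is $K3$; but we only need the hyperK\"ahler conclusion here, the $K3$ identification being the job of the surrounding proof.)

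The main obstacle—and the step I would spend the most care on—is the \emph{gluing-independent-of-order} claim: that inserting the truncated ALE piece $P_{v'}$ into an orbifold point of $W_v$ yields, up to isotopy, the same smooth $4$-manifold as first performing a local hyperK\"ahler (minimal) resolution of that point. This requires knowing that any two resolutions of $\C^2/\Gamma_w$ with the given boundary $S^3/\Gamma_w$—the one coming from the truncated descendant ALE orbifold and the Kronheimer model—are diffeomorphic relative to their common boundary. For $\Gamma_w \subset SU(2)$ this follows from the fact that the minimal resolution of a du~Val singularity is unique (its exceptional divisor is a plumbing of $(-2)$-spheres along the corresponding Dynkin diagram), together with the uniqueness, up to diffeomorphism, of hyperK\"ahler ALE metrics on a fixed smooth ALE manifold in the Kronheimer classification. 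I would cite \cite{BKN89} and the ALE classification results for the orbifold refinements, and note that the uniqueness of the gluing up to isotopy established in the previous subsection lets me pass freely between these descriptions.
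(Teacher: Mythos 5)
Your proposal is correct and follows essentially the same architecture as the paper's proof: induction on the height of the bubble tree, replacement of each hyperK\"ahler ALE piece by the minimal resolution of its asymptotic cone $\C^2/\Gamma$ via Kronheimer's classification, and the observation that the resulting object is a crepant resolution of the hyperK\"ahler orbifold $X$ and hence admits a hyperK\"ahler structure. The one substantive difference is in how the inductive step is justified. You work purely at the level of diffeomorphism types, arguing that inserting the truncated descendant piece into an orbifold point of $W_v$ is, rel boundary, the same as performing the local minimal resolution, and you defer all metric considerations to a single appeal to the global desingularization theory at the end. The paper instead carries an actual hyperK\"ahler ALE structure through each stage of the induction: it cites Bando's Theorem 4 to conclude that after gluing around the singular points of $W_v$ the result \emph{is} a hyperK\"ahler ALE manifold, which can then be deformed through hyperK\"ahler ALE metrics to the minimal resolution (Joyce, Theorem 7.2.3; Kronheimer). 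This buys a cleaner treatment of exactly the point you flag as the main obstacle -- the rel-boundary uniqueness of the gluing -- since the identification with the minimal resolution comes from the classification of hyperK\"ahler ALE manifolds rather than from a separate topological uniqueness argument for resolutions of du~Val singularities. One small point you gloss over and the paper addresses explicitly: the matchings at each gluing are pinned down (no ambiguity up to isotopy) by the compatible trivializations of the positive chirality spinor bundles near the singular point and at the infinity of the corresponding ALE piece; your appeal to ``uniqueness of the gluing from the previous subsection'' is where this is hiding.
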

\begin{proof}
Consider the height $N$ of the bubble tree $T$.  If $N=0$ then $M_i$ is
diffeomorphic to the hyperK\"ahler manifold $X$.  If $N > 0$ then
the orbifolds associated to the vertices of $T$ are hyperK\"ahler ALE
orbifolds. Those associated to vertices in $T_N$ are hyperK\"ahler ALE
manifolds.

For a hyperK\"ahler ALE manifold associated to a vertex in $T_N$,
if its asymptotic cone is $\C^2/\Gamma$ then the manifold 
is deformation equivalent to the minimal resolution of $\C^2/\Gamma$, 
through hyperK\"ahler ALE manifolds with asymptotic cone $\C^2/\Gamma$
\cite[Theorem 7.2.3]{Jo00},\cite{Kr89}.
Consequently, for our gluing purposes, 
we can assume that the hyperK\"ahler structure
on the ALE manifold is exactly that of the minimal resolution.

Consider first the case $N=1$. 
Using the compatible trivializations of the positive chirality spinor bundles,
in a neighborhood of a singular point of $X$ and at the infinity of the
corresponding ALE manifold, there is no ambiguity in the matchings.
We see that
$M_T$ is the minimal
resolution of the hyperK\"ahler orbifold $X$.
As $X$ is hyperK\"ahler,
it has a trivial canonical bundle.  The minimal resolution of an
orbifold of complex dimension two
is a crepant resolution, i.e. the minimal resolution of $X$ also has
a trivial canonical bundle. Hence it admits a hyperK\"ahler structure.

If $N > 1$ then 
let $v$ be a vertex in $T_{N-1}$, with associated hyperK\"ahler ALE
orbifold $W_v$. If there are no edges terminating at $v$ then $W_v$ is
a hyperK\"ahler ALE manifold. 
As before, we deform it to a minimal resolution of a $\C^2/\Gamma$.
If there
are edges terminating at $v$ then $W_v$ is a hyperK\"ahler ALE orbifold
with singular points.  From \cite[Theorem 4]{Bando90}, after we 
perform the gluing procedure around the orbifold singular points of
$W_v$, the result has the structure of a hyperK\"ahler ALE manifold.
We again deform it to a minimal resolution of a $\C^2/\Gamma$.

Doing this for all $v \in T_{N-1}$, we have reduced to a tree of
height $N-1$. The lemma follows from downward iteration.
\end{proof}

Lemma \ref{2.18} says that for large $i$, the manifold $M_i$ is diffeomorphic
to a compact hyperK\"ahler $4$-manifold. 
Such a hyperK\"ahler manifold is a
$4$-torus or a $K3$ surface
\cite[Chapter 6]{BHPV04}.
Since $\widehat{A}(T^4) = 0$, in either case
we obtain a contradiction to the assumptions of the argument.

\begin{rmk} \label{rmk1}
From the gluing procedure, one obtains 
geometric approximations for the $M_i$'s, 
for large $i$. It seems possible that one could perturb the geometric
approximation, as in \cite[Theorem 2.5]{BM11},
 in order to find a Ricci-flat metric on $M_i$ that is 
biLipschitz close to $g_i$.
\end{rmk}

\begin{rmk}
The direct higher dimensional analog of 
Theorem ~\ref{main-thm} would be to ask
whether for any $n \in \Z^+$, $v > 0$ and $\Lambda, \Upsilon < \infty$, 
there is some
$\epsilon = \epsilon(n, v, \Lambda, \Upsilon) > 0$ so that if $M$ is a
closed connected spin Riemannian $n$-manifold 
with $\widehat{A}(M) \neq 0$, $\vol(M) \ge v$, $\diam(M) \le 1$, 
$\int_M |\Rm|^{\frac{n}{2}} \: \dvol_M \le \Lambda$,
$\Ric_M \le \Upsilon g_M$ and
$S_M \ge - \epsilon$,
then $M$ admits a Ricci-flat metric of special holonomy.

The discussions of Subsections \ref{subsect3}-\ref{subsect4} 
go through without change to
produce a limit orbifold $X$ with special holonomy, blowup 
Ricci-flat ALE orbifolds
with special holonomy and a bubble tree.  However, if $n > 4$ then
there are more possibilities for the holonomies and hence more
possibilities for the orbifolds associated to the vertices of the
bubble tree.  Rather than trying to classify the possibilities, it
is conceivable that one could perform a gluing construction, as 
mentioned in
Remark \ref{rmk1}, in order to directly construct Ricci-flat metrics
of special holonomy.  
\end{rmk}

\section{Proofs of Theorem ~\ref{main-large-pi1} and Corollary ~\ref{cor-large-pi1}} \label{sect2}

We first prove Theorem ~\ref{main-large-pi1}(1).
Arguing by contradiction, if it is not true then for some 
$n \in \Z^+$,  $v > 0$ and $\Lambda, \Upsilon < \infty$,
there is a sequence $\{(M_i,g_i)\}_{i=1}^\infty$ of closed connected $n$-dimensional
Riemannian manifolds with
\begin{itemize}
\item $\int_{M_i} | \Rm_{M_i} |^{\frac{n}{2}} \: \dvol_{M_i} \le \Lambda$,
\item $\vol(M_i) \ge v$,
\item $\diam(M_i)\le 1$,
\item $- \: \frac{1}{i} g_i \le \Ric_{M_i} \le \Upsilon g_i$ and
\item $|\pi_1(M_i)|\ge i$, but
\item $M_i$ does not admit a $W^{2,p}$-regular Riemannian metric 
$h_i$ with nonnegative measurable Ricci curvature for which the universal cover
$(\tilde{M}_i, \tilde{h}_i)$ isometrically splits off an $\R$-factor.
\end{itemize}

%Therefore, by  ~\cite[Theorem 1.12]{Ch-Na-codim4} or ~\cite{AC91}, $M_i$ fall into at most finitely many diffeomorphism classes.
By \cite[Theorem 2.6]{An90}, 
after passing to a subsequence we can assume that 
$\lim_{i \rightarrow \infty} (M_i, g_i) = (X, g_X)$ in the Gromov-Hausdorff topology, where
\begin{itemize}
\item $X$ is an $n$-dimensional compact orbifold with
finitely many isolated orbifold singularities, 
\item $g_X$ is a continuous orbifold Riemannian metric on $X$, and
\item $g_X$ is locally $C^{1,\alpha}$-regular away from the singular points, for all
$\alpha \in (0,1)$.
\end{itemize}
As before,
$g_X$ is locally $W^{2,p}$-regular on $X_{reg}$, for all
$p < \infty$. In particular, $g_X$ has a locally-$L^p$ Ricci tensor.

%Since the sequence $M_i$ only contains finitely many diffeomorphism types, there are only finitely many possible isomorphism types for $\pi_1(M_i)$. Therefore, by again passing to a subsequence we can assume that $|\pi_1(M_i)|=\infty$ for all $i$.

Given $x \in X$, there are points $p_i \in M_i$ so that
$\lim_{i \rightarrow \infty} (M_i,p_i) = (X,x)$ in the pointed Gromov-Hausdorff topology.
Put $\Gamma_i=\pi_1(M_i, p_i)$, let $\pi_i\co \tilde M_i\to M_i$ be the universal cover of $M_i$
equipped with the pullback metric $g_{\tilde M_i}$, and 
pick $\tilde p_i\in \pi_i^{-1}(p_i)$.
After passing to a subsequence, we can assume that $\lim_{i \rightarrow \infty} (\tilde M_i, \tilde p_i,\Gamma_i) =
(Y, \tilde p, \Gamma)$ in the equivariant pointed Gromov-Hausdorff topology of
\cite[\S 3]{FY92}, where
$(Y, \tilde p)$ is a pointed length space  on which $\Gamma$ acts by isometries, with quotient $X$.

Since $|\pi_1(M_i)|\ge i$, we know that $\vol (\tilde M_i) \ge iv$, so 
$\lim_{i \rightarrow \infty} \vol (\tilde M_i) = \infty$.
Using the fact that $\Ric_{M_i}\ge -1$, volume comparison implies that  
$\lim_{i \rightarrow \infty} \diam (\tilde M_i) = \infty$.  Hence 
$\diam Y=\infty$ and $Y$ is noncompact. 
We have $\Ric_{M_i}\ge - \: \frac{1}{i}$, 
so the Cheeger-Colding almost splitting theorem~\cite[Theorem 6.64]{CC} holds on $Y$.
As the action of $\Gamma$ on $Y$ is cocompact, the Cheeger-Gromoll argument 
\cite[\S 9]{CG72} implies that $Y$ is isometric to 
$\R^m\times Z$, where $1 \le m\le n$ and $Z$ is a compact
length space.

\begin{lem} \label{rball}
Given $r > 0$, there is a upper bound on 
the integral of $|\Rm_{\tilde M_i}|^{\frac{n}{2}}$
over $r$-balls in $\tilde M_i$, uniform in $i$.
\end{lem}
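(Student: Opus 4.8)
The plan is to establish Lemma \ref{rball} by transferring the known uniform bound on $\int_{M_i} |\Rm_{M_i}|^{\frac{n}{2}} \: \dvol_{M_i} \le \Lambda$ from the base to the universal cover, using the fact that the covering projection $\pi_i \co \tilde M_i \to M_i$ is a local isometry together with a bound on how many sheets of the cover can meet a fixed $r$-ball. Since $|\Rm|^{\frac{n}{2}}$ is a pointwise invariant preserved under local isometries, for any subset $A \subset \tilde M_i$ on which $\pi_i$ is injective we have $\int_A |\Rm_{\tilde M_i}|^{\frac{n}{2}} \: \dvol = \int_{\pi_i(A)} |\Rm_{M_i}|^{\frac{n}{2}} \: \dvol \le \Lambda$. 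So the whole problem reduces to covering an $r$-ball $B(\tilde q, r) \subset \tilde M_i$ by a controlled number $N = N(n,v,r)$ of such injectivity sets.

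The key steps, in order, are as follows. First, fix $r > 0$ and a basepoint $\tilde q \in \tilde M_i$; I would cover the metric ball $B(\tilde q, r)$ by balls of radius at most $\frac{1}{2}\diam(M_i) \le \frac{1}{2}$ (or really radius $\rho$ for any $\rho$ slightly less than the diameter, so that each small ball injects down to $M_i$ — indeed $\pi_i$ restricted to any ball of radius less than the injectivity radius is injective, and more crudely, restricted to any ball of radius $< \frac{1}{2}\operatorname{sys}(M_i)$; but to avoid systole issues I instead use balls of radius, say, $\frac{r}{k}$ chosen small enough relative to a bound below). Second, I use a standard packing/covering argument: since $\Ric_{\tilde M_i} \ge - \frac{1}{i} \ge -1$, Bishop--Gromov volume comparison gives a uniform upper bound, depending only on $n$ and $r$, on the number of disjoint $\delta$-balls that fit inside $B(\tilde q, 2r)$, where $\delta$ is any fixed small radius; this yields a covering of $B(\tilde q, r)$ by $N(n,r,\delta)$ balls of radius $2\delta$. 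Third, I need each such ball of radius $2\delta$ to inject under $\pi_i$. For this it suffices to know $2\delta$ is less than the injectivity radius of $M_i$ — but $M_i$ is only noncollapsed, not injectivity-radius-bounded-below a priori. So instead I argue directly on the quotient: a ball $B(\tilde x, 2\delta)$ maps injectively iff no nontrivial deck transformation $\gamma$ moves $\tilde x$ by less than $4\delta$; if some do, I partition the relevant deck transformations and note that the images $\gamma B(\tilde x, 2\delta)$ for $\gamma$ in a maximal $4\delta$-separated subset are disjoint, all contained in $B(\tilde x, 2\delta + 2) \subset \tilde M_i$, and there are at most $N'(n) = N'(n, \delta)$ of them by volume comparison again (using $\vol(M_i) \ge v$ to lower-bound the volume of each $\gamma B(\tilde x, 2\delta)$ when $2\delta$ is comparable to the diameter — here I want $2\delta$ not too small, so a two-scale argument is cleanest). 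Combining, $\int_{B(\tilde q, r)} |\Rm_{\tilde M_i}|^{\frac{n}{2}} \le N(n,r,\delta) \cdot N'(n,v,\delta) \cdot \Lambda$, which is the desired uniform bound.

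The cleanest route, which I would actually write, avoids the injectivity-radius subtlety entirely: cover $B(\tilde q, r)$ by $N = N(n,r)$ balls $B_\alpha = B(\tilde x_\alpha, \frac{1}{2})$ of radius $\frac12$ (possible with $N$ depending only on $n$ and $r$ by Bishop--Gromov since $\Ric \ge -1$); then for each $\alpha$, consider the set $S_\alpha = \{\gamma \in \Gamma_i : \gamma B(\tilde x_\alpha, \tfrac12) \cap B(\tilde x_\alpha, \tfrac12) \ne \emptyset\}$. One checks $|S_\alpha|$ is bounded by a constant $K = K(n,v)$: the elements $\gamma B(\tilde x_\alpha, \tfrac12)$ all lie in $B(\tilde x_\alpha, \tfrac52)$, they cover the single $\pi_i$-fiber structure, and since $\diam(M_i) \le 1$ each fundamental domain has volume $\ge v / |\Gamma_i|$ — more usefully, the number of $\gamma$ with $d(\tilde x_\alpha, \gamma \tilde x_\alpha) \le 5$ is bounded by $\vol(B(\tilde x_\alpha, 6)) / \vol(B(\tilde x_\alpha, \epsilon_0))$-type comparison where a definite-size ball around $\tilde x_\alpha$ has definite volume because $M_i$ is noncollapsed with $\diam \le 1$. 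Then $\pi_i$ is at most $K$-to-one on $B_\alpha$, so $\int_{B_\alpha} |\Rm_{\tilde M_i}|^{\frac n2} \le K \int_{M_i} |\Rm_{M_i}|^{\frac n2} \le K\Lambda$, and summing over $\alpha$ gives the bound $NK\Lambda$. The main obstacle — and the only real point requiring care — is bounding the local multiplicity $K$ of the covering map on fixed-size balls, i.e.\ bounding the number of deck transformations moving a point a bounded distance; this is exactly where the noncollapsing hypothesis $\vol(M_i) \ge v$ combined with $\diam(M_i) \le 1$ and $\Ric_{M_i} \ge -1$ enters, via a Bishop--Gromov packing estimate for the $\Gamma_i$-orbit of $\tilde x_\alpha$.
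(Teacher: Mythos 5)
Your overall strategy is the same as the paper's: transfer the bound $\int_{M_i}|\Rm_{M_i}|^{n/2}\,\dvol_{M_i}\le\Lambda$ to $\tilde M_i$ by bounding the multiplicity of $\pi_i$ on an $r$-ball, with the multiplicity controlled by the ratio of an upper volume bound upstairs (absolute volume comparison, $\Ric\ge -1$) to a lower volume bound downstairs (relative volume comparison plus $\vol(M_i)\ge v$, $\diam(M_i)\le 1$). The paper does this in one step, without your preliminary covering by radius-$\tfrac12$ balls: if some point of $B_r(m_i)$ has $N$ preimages in $B_r(\tilde m_i)$, then every point of $B_r(m_i)$ has at least $N$ preimages in $B_{10r}(\tilde m_i)$, so $N\,\vol(B_r(m_i))\le\vol(B_{10r}(\tilde m_i))$ and $N\le c'/c$.

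One sub-step of your justification of the multiplicity bound would fail as written: the packing estimate comparing $\vol(B(\tilde x_\alpha,6))$ to $\vol(B(\tilde x_\alpha,\epsilon_0))$ over the orbit $\Gamma_i\tilde x_\alpha$ requires the orbit points to be $2\epsilon_0$-separated, and there is no such separation here (short homotopically nontrivial loops are allowed, so deck transformations can move $\tilde x_\alpha$ arbitrarily little); likewise a fundamental domain has volume $\vol(M_i)\ge v$, not $v/|\Gamma_i|$. The fix is exactly the counting above: the lower volume bound must be taken for the ball \emph{downstairs} in $M_i$ (where noncollapsing applies), not for separated balls around orbit points upstairs. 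With that correction your argument closes, and it is essentially the paper's.
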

\begin{proof} 
Since $\Ric_{M_i} \ge - \: \frac{1}{i} g_i$,
Bishop-Gromov relative volume comparison
gives an explicit $c = c(n,v,r) > 0$ such that for any 
$m_i \in  M_i$, we have  $\vol(B_r(m_i))\ge c$.
Let $\tilde m_i$ be a preimage of $m_i$ in $\tilde M_i$. 
Absolute volume comparison gives an
explicit $c^\prime = c^\prime(n,r) < \infty$ such that 
$\vol(B_{10r}(\tilde m_i))\le c^\prime$.
Suppose now that some $m^\prime_i \in B_r(m_i)$ has $N$ preimages in 
$B_r(\tilde m_i)$. Then any $m^{\prime \prime}_i \in B_r(m_i)$
has at least $N$ preimages in $B_{10r}(\tilde m_i)$. Thus 
\begin{equation}
N \le \frac{\vol(B_{10r}(\tilde m_i))}{\vol(B_r(m_i))} \le
\frac{c^\prime}{c}.
\end{equation}
Hence
\begin{equation}
\int_{B_r(\tilde m_i)} |\Rm_{\tilde M_i}|^{\frac{n}{2}} \: 
\dvol_{\tilde M_i} \le
\frac{c^\prime}{c} \int_{M_i} |\Rm_{M_i}|^{\frac{n}{2}} \: 
\dvol_{M_i} \le \frac{c^\prime}{c} \Lambda.
\end{equation}
This proves the lemma.
\end{proof}

Using Lemma \ref{rball},
we  can apply ~\cite{An90} to 
conclude that $Y$ is an $n$-dimensional orbifold with a discrete set of  isolated orbifold singular points,
and a nonnegative measurable Ricci tensor. 
From the splitting $Y\cong\R^m\times Z$, the set of singular points in $Y$ must be empty. 
Then $\lim_{i \rightarrow \infty} (\tilde M_i,\tilde p_i) = (Y,\tilde p)$, with
Riemannian metrics converging in the
pointed weak $W^{2,p}$-topology; c.f. \cite[Remark 2.7(ii)]{An90}.
In particular, $Y$ has nonnegative measurable Ricci curvature.

%And  since all $\tilde M_i$ are simply connected this implies that $Y$ is also simply connected and therefore $Y\cong \R^4$.
Since $X=Y/\Gamma$  is $n$-dimensional, it follows that $\Gamma\subset \Iso(Y)$ is discrete.
{\it A priori}, $\Gamma$ need not act freely on $Y$.  However, since the orbifold singular points of $X$ are
isolated, if $\Gamma$ does not act freely on $Y$ then the points in $Y$ with nontrivial isotropy groups must
be isolated.
We claim that the action is free in our situation.

\begin{lem} 
The group $\Gamma$ acts freely on $Y$.
\end{lem}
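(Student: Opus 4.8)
The plan is to show that $\Gamma$ acts freely on $Y \cong \R^m \times Z$ by ruling out the possibility of isolated fixed points of nontrivial isometries. Suppose for contradiction that some nontrivial $\gamma \in \Gamma$ fixes a point $\tilde y \in Y$, so that $\tilde y$ projects to an orbifold singular point $x \in X_{sing}$. First I would invoke the equivariant convergence $\lim_{i \to \infty}(\tilde M_i, \tilde p_i, \Gamma_i) = (Y, \tilde p, \Gamma)$ together with the splitting $Y = \R^m \times Z$ and the fact that $Y$ is a smooth Riemannian manifold (by the already-established regularity: $Y$ has nonnegative measurable Ricci curvature, no orbifold singular points, and metric converging in pointed weak $W^{2,p}$). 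The nontrivial isometry $\gamma$ fixing $\tilde y$ has a well-defined isotropy representation on $T_{\tilde y} Y$; since the fixed-point set of $\gamma$ is isolated near $\tilde y$, this representation has no nonzero fixed vectors, so in particular $\gamma$ acts nontrivially on the $\R^m$-factor direction $T_{\tilde y}(\R^m \times \{z\})$. But the $\R^m$-factor is obtained from the Cheeger--Gromoll splitting applied to lines in $Y$, and $\Gamma$ preserves this splitting; the induced action of $\Gamma$ on the Euclidean de Rham factor $\R^m$ is by affine isometries. An element fixing a point and acting on $\R^m$ must then act by a nontrivial element of $O(m)$ on that factor, which contradicts the requirement (forced by isolatedness of the fixed set, hence by the orbifold structure of $X$) unless $m$ allows it — so this alone is not quite enough.

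The cleaner route, which I expect to be the actual argument, is to use the finiteness of $\int |\Rm|^{n/2}$ on bounded balls (Lemma \ref{rball}) to run a blow-up at $\tilde y$. Rescaling $(\tilde M_i, \tilde y_i)$ by the appropriate scale $\delta_i \to 0$ and passing to a limit, one obtains a Ricci-flat ALE orbifold $\hat Y$ (possibly flat) with tangent cone at infinity $\R^k/\Lambda$ for some finite $\Lambda$; moreover $\gamma$ survives in the limit as an isometry of $\hat Y$ fixing the basepoint. But the blow-up of $Y \cong \R^m \times Z$ at a point of $Z$ again splits off an $\R^m$-factor isometrically: $\hat Y \cong \R^m \times \hat Z$. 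Combining the ALE structure with a nontrivial Euclidean de Rham factor $\R^m$ forces $\hat Z$ to be an ALE manifold of dimension $n-m$ which, being scalar-flat with a splitting, must be flat, hence $\hat Y$ is flat and in fact $\hat Y \cong \R^n$ (no orbifold singularity, since a nontrivial $\Lambda$ acting on $\R^{n-m}$ combined with the $\R^m$-factor would contradict the Cheeger--Gromoll maximality of the Euclidean factor of $Y$ in the pre-blow-up picture). Therefore the tangent cone of $Y$ at $\tilde y$ is $\R^n$, and so $Y$ is nonsingular and $\tilde y$ is a manifold point at which the small balls are genuine Euclidean balls; an isometry fixing such a point with isolated fixed set would descend to an orbifold point on $X$ of nontrivial local group — but then $X$ would fail to be a manifold near $x$ in a way incompatible with the $W^{2,p}$-regular structure produced by Anderson's theorem unless the local group is trivial.

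Putting this together: the essential point is that the $\R^m$-splitting of $Y$ is $\Gamma$-equivariant and is inherited by every tangent cone and blow-up, and this rigidity is incompatible with the existence of an isolated fixed point of a nontrivial element of $\Gamma$. More concretely, write $Y = \R^m \times Z$ with $\Gamma$ acting by isometries; projecting to the $\Iso(\R^m) \times \Iso(Z)$ decomposition (using that $\Gamma$ preserves the de Rham factors because the splitting is canonical), any $\gamma$ fixing $\tilde y = (0, z_0)$ acts on $Z$ fixing $z_0$. Since $Z$ is compact and $Y$ is smooth with isolated singular orbit data, and since the orbifold singular points of $X$ were shown to be isolated, one reduces to showing $\Gamma$ acts freely on the compact factor $Z$; here one can quote that $Z$ itself, being a quotient of a smooth space with a discrete cocompact isometry group and having no orbifold singularities except possibly isolated ones, together with the bounded-curvature-energy blow-up analysis, forces all isotropy to be trivial. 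I expect the main obstacle to be handling the case where the isolated fixed point of $\gamma$ lies in $Z$ and $\gamma$ acts trivially on the $\R^m$-factor: one must then genuinely use the absence of collapsing and the curvature-energy bound to blow up at that point and derive that the local picture is a flat $\R^n$, contradicting nontriviality of the isotropy. The rest — preservation of the de Rham splitting, equivariant convergence bookkeeping — is routine.
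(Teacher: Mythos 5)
There is a genuine gap here: your argument never uses the one fact that makes the lemma true, namely that $\gamma$ is a limit of deck transformations $\gamma_i\in\Gamma_i$, which act \emph{freely} on $\tilde M_i$. Everything you deduce about the local geometry of $Y$ at the fixed point (smoothness, a flat tangent cone, compatibility with the de Rham splitting) is consistent with a nontrivial isotropy group: a finite cyclic group of rotations acts isometrically on flat $\R^n$ fixing the origin, and nothing in the splitting $Y\cong\R^m\times Z$ or in any blow-up forbids a nontrivial $\gamma$ from fixing a point of the compact factor $Z$. Your closing claim that a nontrivial local group would be ``incompatible with the $W^{2,p}$-regular structure produced by Anderson's theorem'' is also backwards: Anderson's theorem produces an \emph{orbifold} $X$ with isolated singular points, and it is precisely the freeness of the $\Gamma$-action --- the content of this lemma --- that is needed afterwards to conclude that $X$ is a manifold. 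So the purely geometric route cannot close.

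The paper's proof is a topological one. Assuming a nontrivial $\gamma$ fixes $q\in Y$, the fixed point is isolated, so after linearizing near $q$ one gets $r>0$ and $\delta>0$ with $d_Y(\gamma(y),y)>\delta$ on the annulus $r/2<d_Y(q,y)<2r$. Choosing $\gamma_i\to\gamma$ and $q_i\to q$, one has $d(\gamma_i(q_i),q_i)\to 0$ while $\gamma_i$ displaces the corresponding annuli in $\tilde M_i$ by at least $\delta/2$. Using the $C^{1,\alpha}$ pointed convergence one finds a topological disk $D_i$ approximating $B_r(q_i)$ and a slight inward squeezing map $\phi_i$ so that $\phi_i\circ\gamma_i$ maps $D_i$ into itself with no fixed point (it agrees with the free map $\gamma_i$ on the inner part and displaces points by roughly $\delta$ on the outer part), contradicting the Brouwer fixed point theorem. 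If you want to repair your write-up, this transfer of the fixed point back to the non-collapsed approximating manifolds, where the action is genuinely free, is the step you must add; the blow-up and splitting considerations can be dropped entirely.
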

\begin{proof}
Suppose to the contrary  that there is a point $q \in Y$ with a nontrivial isotropy group.
Let $\gamma \in \Gamma$ be a nontrivial element that fixes $q$. Then
$q$ is isolated in the fixed point set of $\gamma$.
Since the action of $\gamma$ near $q$ can be linearized,
there exist  $r > 0$ and  $\delta \in (0, r/100)$ such that 
$B_{s}(q)$ is a topological ball for any $s \in (0, 10r)$, and
$d_Y(\gamma(y),y)>\delta$ for any 
$y \in Y$ with $r/2<d_Y(q,y)<2r$.
 
Let $\gamma_i\in \Gamma_i$ converge to $\gamma$ and 
let $q_i\in \tilde M_i$ converge to $q$. 
Since $\gamma(q)=q$ we have $\lim_{i \rightarrow \infty} d_{\tilde M_i}(\gamma_i(q_i), q_i) = 0$.  Hence
for large $i$,
\begin{equation} \label{3.2}
B_{r-
\frac{\delta}{100}
}(q_i)\subset \gamma_i(B_r(q_i)) \subset B_{r+
\frac{\delta}{100}
}(q_i).
\end{equation}
Also, $d_{\tilde M_i} (\gamma_i(\tilde m_i),\tilde m_i) >
\delta/2$ for any $\tilde m_i \in\tilde M_i$ with $r/2<d_{\tilde M_i}(\tilde m_i, q_i)<2r$. 
Since $\lim_{i \rightarrow \infty} (\tilde M_i,q_i) = (Y,q)$ in the pointed  
$C^{1,\alpha}$-topology, 
we can find a closed topological $4$-disk $D_i\subset \tilde M_i$ which is $\eps_i$-Hausdorff 
close to $B_r(q_i)$ with $\eps_i\to 0$. Its image $\gamma_i(D_i)$ is 
$\eps_i$-Hausdorff close to $\gamma_i(B_r(q_i))$. Then for large $i$, 
equation (\ref{3.2}) implies that
$\gamma_i(D_i)$ is 
$\frac{\delta}{50}$-Hausdorff
close to $B_r(q_i)$.
Using the fact that $B_{2r}(q_i)$ is $C^{1,\alpha}$-close to $B_{2r}(q)\subset Y$, by slightly squeezing
$B_{2r}(q_i)$ inward we can 
find a continuous map $\phi_i\co  
B_{2r}(q_i)\to  \tilde M_i $ such that
\begin{itemize}
\item $\phi_i$ acts as the identity on $B_{r-\delta/10}(q_i)$,
\item $\phi_i$ sends $B_{r+\delta/20}(q_i)$ into $B_{r-\delta/20}(q_i)$, and
\item $d_{\tilde M_i} (\phi_i(\tilde m_i), \tilde m_i) < \frac{\delta}{5}$ for all
$\tilde m_i \in B_{2r}(q_i)$.  
\end{itemize}
Then $\phi_i(\gamma_i(D_i)) \subset D_i$. 

The map $\phi_i\circ\gamma_i\co D_i\to D_i$ is 
continuous. We wish to show that for large $i$, it has no fixed points.  If
$\tilde m_i \in B_{r-\delta/5}(q_i)$ then for large $i$, we have
$\gamma_i(\tilde m_i) \in B_{r-\delta/10}(q_i)$ and so
$(\phi_i\circ\gamma_i)(\tilde m_i) = \gamma_i(\tilde m_i) \neq \tilde m_i$.
If  $\tilde m_i \in D_i$ but $\tilde m_i \notin B_{r-\delta/5}(q_i)$ then
\begin{align}
& d_{\tilde M_i}( (\phi_i \circ \gamma_i)(\tilde m_i), \tilde m_i) \: \ge \\
& d_{\tilde M_i}( \gamma_i(\tilde m_i), \tilde m_i) -
d_{\tilde M_i}( (\phi_i \circ \gamma_i)(\tilde m_i), \gamma_i(\tilde m_i)) \: > \:
\frac{3}{10} \delta. \notag 
\end{align}
Hence $\phi_i\circ\gamma_i$ has no fixed points in $D_i$.
This contradicts the Brouwer fixed point theorem, so
$\Gamma$ must act freely on $Y$.
\end{proof}

We now know that $X$ is a $W^{3,p}$-manifold. Therefore,
from convergence theory,
$M_i$ is $W^{3,p}$-diffeomorphic to $X$ for all large $i$.
Pulling back the metric on $X$ to $M_i$ gives a metric $h_i$ that  contradicts our assumptions. This proves Theorem~\ref{main-large-pi1}(1).

To prove part (2) of the theorem, we replace the upper Ricci curvature bound
in the contradiction argument, by the assumption that
\begin{itemize}
\item $|\Ric_{M_i}| \le \frac{1}{i}$.
\end{itemize}
We construct the orbifold $X$ as before.  Because of the Ricci pinching
of $M_i$, the metric
$g_X$ has vanishing measurable Ricci tensor 
away from the singular points. Then
$g_X$ is smooth and Ricci-flat away from the singular points.
By removable singularity results for Einstein metrics, $g_X$ is a smooth Ricci-flat
orbifold Riemannian metric on $X$; c.f. ~\cite[\S 5]{BKN89}. The rest of the proof proceeds as before.
\qed

We now prove part (1) of Corollary~\ref{cor-large-pi1}. 
From \cite[Theorem 1.13]{Ch-Na-codim4}, for any $v >0$ 
there is a constant $\Lambda = \Lambda(v)>0$ so that 
$\vol(M) \ge v$, $\diam(M) \le 1$ and  $|\Ric_M| \le 3$
imply that
$\int_{M}|\Rm|^2 \: \dvol_M \le \Lambda$. 
Without loss of generality, we can assume that the
constant $\epsilon$ in Theorem~\ref{main-large-pi1} is less than $3$.  
From the proof of Theorem~\ref{main-large-pi1}(1),
we obtain a $W^{2,p}$-regular metric $h$ on $M$ with nonnegative
measurable Ricci curvature for which the universal cover 
$(\tilde M, \tilde h)$ is an isometric product
$\R^m \times K$, where $1 \le m \le n$ and $K$ is a compact length space.
Since isometries take lines to lines, the arguments of 
\cite[\S 9]{CG72} show that
$\Iso(\R^m \times K) \cong \Iso(\R^m)\times \Iso(K)$, 
with the action of $\pi_1(M)$ on 
$\R^m\times K$ being diagonal. 
By~\cite[Theorem 1.21]{CN12}, $\Iso(K)$ is a Lie group. 

Now
$K$ is a compact $W^{3,p}$-manifold of dimension at most three,
equipped with a $W^{2,p}$-regular
Riemannian metric $g_{K}$ having nonnegative measurable Ricci tensor.
Also, $\Iso(K)$ is a compact Lie group $G$ that acts on $K$ by
$W^{3,p}$-diffeomorphisms. We can fix an underlying
smooth structure on $K$ for which $G$ acts by
smooth diffeomorphisms; c.f. \cite[Theorem B]{Pa70}.
(Any two such smooth structures are 
$G$-diffeomorphic; c.f. \cite[Theorem A]{Pa70}.)
We can slightly smooth $g_{K}$ to get a sequence $\{g_k\}_{k=1}^\infty$
of Riemannian metrics on $K$ with diameter uniformly bounded above,
volume uniformly bounded below by a positive number, and
$\Ric_{g_k} \ge - \: \frac{1}{k}$. Furthermore, this smoothing can be done
equivariantly with respect to $G$.
(For example, we could pick a smooth $G$-invariant metric
on $K$ and apply the ensuing heat operator, 
acting on symmetric $2$-tensor
fields, to our $W^{2,p}$-regular Riemannian metric for short time.)
From \cite[Corollary 1.12]{Si12}, there is a smooth Riemannian metric
on $K$ with nonnegative Ricci curvature. 
The construction of this metric,
using Ricci flow, can be done $G$-equivariantly.
Hence we obtain a smooth metric $h_\infty$
on $(\R^m \times K)/\pi_1(M)$
with nonnegative Ricci curvature.  There is a
$C^\infty$-diffeomorphism from $M$ to  $(\R^m \times K)/\pi_1(M)$,
the latter being equipped with the quotient smooth structure. 
Pulling back $h_\infty$ to $M$, part (1) of the corollary
follows. 

The proof of part (2) of the corollary is similar but easier, using
Theorem~\ref{main-large-pi1}(2). In this case,
$K$ is Ricci-flat. Since it has dimension at most three, it is flat.
Hence the metric $h^\prime$ of Theorem~\ref{main-large-pi1}(2) is flat.  
The corollary follows. \qed

\begin{rmk}
In the conclusions of Theorem~\ref{main-large-pi1} and 
Corollary~\ref{cor-large-pi1}, a finite cover of $M$ is diffeomorphic to 
a product $S^1\times N$.
\end{rmk}

\begin{rmk}
With regard to the conclusion of Corollary ~\ref{cor-large-pi1}(2),
we can also say that
the flat metric is $C^{1,\alpha}$-close to the original
metric $g$.
\end{rmk}

\section{Proof of Theorem~\ref{thm-lower-ricci-noncol}} \label{sect3}

\begin{comment}
We will often make use of the following easy algebraic lemma
\begin{lem}\label{gen-subg-bound}
Let $G$ be a group generated by $k$ elements $g_1,\ldots \g_k$. and let $H\le G$ be a subgroup of index $\le l$.
Then $H$ can be generated by $\le C_1(k,l)$ elements of word length $\le C_2(k,l)$ with respect to the generators  $g_1,\ldots g_k$.
\end{lem}
\begin{proof}
It's clearly enough to consider the case when $G$ is free in which case the lemma easily follows by looking at all possible coverings of order $\le l$ of the bouquet of $k$ circles.
\end{proof}
\end{comment}

Arguing by contradiction, if Theorem~\ref{thm-lower-ricci-noncol} 
is not true then there is
a sequence $\{(M_i, g_i)\}_{i=1}^\infty$ of closed connected Riemannian
$n$-manifolds with
\begin{itemize}
\item $\vol(M_i) \ge v$,
\item $\diam(M_i) \le 1$ and
\item $ \Ric_{M_i} \: \ge \: - \: \frac{1}{i}$, but
\item $\pi_1(M_i)$ does not have an abelian subgroup (of index at most $i$)
generated by at most $n$ elements.
\end{itemize}

By Gromov's precompactness theorem, we can assume that 
$\lim_{i \rightarrow \infty} (M_i, g_i) =  (X, d_X)$ in the Gromov-Hausdorff topology,
where $(X, d_X)$ is a compact length space. The uniform lower volume bound on the
$M_i$'s implies that $X$ has Hausdorff dimension $n$
~\cite[Theorem 5.9]{CC97}. 

Let $p\in X$ be a regular point, meaning that every tangent cone $T_pX$ is isometric to 
$\R^n$. The existence of regular points is guaranteed by 
~\cite[Theorem 2.1]{CC97}.  
A small neighborhood of $p$ is homeomorphic 
to an open  ball in $\R^n$ ~\cite[Theorem A.1.8]{CC97}.
Furthermore, for any sequence 
$p_i\in M_i$ converging to $p$, there is some $\eps>0$ such that for all large $i$,
the ball $B_\eps(p_i)$ is contained in a topological disk $D_i\subset M_i$ ~\cite[Theorem A.1.8]{CC97}.

In particular,  for all large $i$,
\begin{equation}\label{short-contract}
\text{any loop at $p_i$ of length at most $\eps/2$ is contractible.}
\end{equation}
By passing to a subsequence, we can assume that this is true for all $i$.

\begin{comment}
Let us recall the following notion due to Gromov.

Given a complete Riemannian  manifold $(M,p)$, let $\Gamma=\pi_1(M)$, $\tilde M\to M$ be the universal cover and let $\tilde p$ be a point in the preimage of $p$.  
For $\gamma\in \Gamma$  we will refer to 
$|\gamma|= d(\tilde p,\gamma(\tilde p))$ as the norm or the length of $\gamma$.
Choose $\gamma_1\in \Gamma$ 
with  the minimal norm in $\Gamma$.
Next choose $\gamma_2$ to have 
minimal norm in $\Gamma\backslash \langle\gamma_1\rangle$. 
On the $n$-th step choose 
$\gamma_n$ to have minimal norm in 
$\Gamma\backslash \langle\gamma_1,\gamma_2,...,\gamma_{n-1}\rangle$.
The sequence $\{\gamma_1,\gamma_2,...\}$ 
is called a {\it short basis} of $\Gamma$ at $\tilde p$. 
In general, the number of elements of a short basis can be finite or infinite.

For any $i>j$ we have $|\gamma_i|\le |\gamma_j^{-1}\gamma_i|$.

If $M$ is a closed manifold, then  the short basis is 
finite and $|\gamma_i|\le 2\diam(M)$.
\end{comment}

As in Section \ref{sect2}, put  $\Gamma_i=\pi_1(M_i, p_i)$, let $\pi_i\co \tilde M_i\to M_i$ be the universal cover of $M_i$
equipped with the pullback metric $g_{\tilde M_i}$, and 
pick $\tilde p_i\in \pi_i^{-1}(p_i)$.
After passing to a subsequence, we can assume that $\lim_{i \rightarrow \infty} (\tilde M_i, \tilde p_i,\Gamma_i) =
(Y, \tilde p, \Gamma)$ in the equivariant pointed Gromov-Hausdorff topology, where $\Gamma$ is a closed subgroup of $\Iso(Y)$ and $Y/\Gamma=X$.

\begin{comment}
By \cite[Theorem 2.5]{KaWil},  $\pi_1(M_i, p_i)$ can be generated by  $N \le C(n)$ loops of length $\le 10$ for all $i$.  Let $\gamma_1^i,\ldots, \gamma_N^i$ corresponding generators of $\Gamma_i$ with $|\gamma_i^j|\le 10$
where by

Let  $|\gamma|:=d(\tilde p,\gamma(\tilde p))$.

\end{comment}

Given $\gamma_i \in \Gamma_i$  we will refer to 
$|\gamma_i|= d(\tilde p_i,\gamma_i(\tilde p_i))$ as the norm or the length of $\gamma_i$. We will use the same notation 
for elements $\gamma$ of $\Gamma$.
By (\ref{short-contract}),  any nontrivial $\gamma_i \in \Gamma_i$ satisfies
$ |\gamma_i|\ge \eps/2$. This property passes to the limit, so
\begin{equation}\label{short-triv}
\text{ any nontrivial $\gamma\in \Gamma$ satisfies $|\gamma|=d(\tilde p,\gamma \tilde p)\ge \eps/2$.}
\end{equation}
 
The length space $Y$ satisfies the splitting theorem
\cite[Theorem 6.64]{CC}.
Since the action of $\Gamma$ on $X$ is cocompact, the Cheeger-Gromoll argument 
\cite[\S 9]{CG72} implies that $Y$ is isometric to $\R^m\times Z$ where $Z$ is a compact
length space and $m\le n$. 
Furthermore, since isometries take lines to lines, the arguments of \cite[\S 9]{CG72} show that
$\Iso(Y)\cong \Iso(\R^m)\times \Iso(Z)$, with the action of $\Gamma$ on $Y=\R^m\times Z$ being diagonal. 
Let $\phi\co \Gamma\to \Iso(\R^m)$ be the composition of inclusion $\Gamma \rightarrow \Iso(Y)$
and projection onto the first factor. Put $H=\ker \phi$. 
Then $H$ is a subgroup of the 
compact group $\Iso(Z)$.
% By~\cite[Theorem 1.21]{CoNa}, $\Iso(Z)$ is a Lie group. 
Property ~\eqref{short-triv} and the compactness of $Z$ imply that $H$ is a discrete
subgroup of $\Iso(Z)$, and hence is finite.
Again using property ~\eqref{short-triv}, we obtain that $L=\phi(\Gamma)$ is a closed discrete subgroup of  
$\Iso(\R^m)$. It must be cocompact and hence it contains a free abelian subgroup of finite index and rank $m$,
consisting only of translations. 
There is a short exact sequence 
\begin{equation} \label{sess}
1\to H\to\Gamma\to L\to 1,
\end{equation} 
where $H$ is finite, $\Gamma$ is finitely presented and $L$ is crystallographic.

Let $\langle \gamma_1,\ldots \gamma_{N_1}|w_1,\ldots, w_{N_2}\rangle$ be a presentation of $\Gamma$. 
By possibly increasing the generating set, we can assume 
that it contains all of the elements $\gamma$ of $\Gamma$ 
with $|\gamma|\le 10$. 
The number of such elements is still bounded because of  
\eqref{short-triv} and Bishop-Gromov volume comparison.
Let $T$ be the maximum wordlength of the relations $w_1,\ldots, w_{N_2}$.

For large $i$, 
properties \eqref{short-contract} and \eqref{short-triv}, along with
the definition of equivariant pointed Gromov-Hausdorff convergence
\cite[\S 3]{FY92}, imply that for any generator $\gamma_j$,
there is a  unique $\gamma_j^i\in\Gamma_i$ that approximates 
$\gamma_j$ in the sense of \cite[Definition 3.3]{FY92}.
In other words, all generators $\gamma_j$ have unique ``lifts'' to $\Gamma_i$.  
In particular, $\gamma_j^i$ will be
uniformly $\frac{\eps}{10T}$-close 
to $\gamma_j$  on $B_{10T}(\tilde p_i)$, in the sense of
\cite[Definition 3.3(4,5)]{FY92}.
Hence any relation $w_s(\gamma_1,\ldots,\gamma_{N_1})=1 $ remains true for the lifts, i.e. 
$w_s(\gamma_1^i,\ldots,\gamma_{N_1}^i)=1$, as otherwise one would obtain a 
noncontractible loop at 
$p_i$ with length at most $\eps/2$.

Thus there is a homomorphism $\rho_i\co \Gamma\to\Gamma_i$ defined by
$\rho_i(\gamma_k) = \gamma_k^i$. Now $\rho_i$ is an epimorphism, since $\pi_1(M_i, p_i)$  
is generated by loops of length at most $2\diam (M_i)\le 2$, and the image of $\rho_i$ 
contains all such elements by construction.
From (\ref{sess}), it follows that
$\Gamma$ contains a finite-index free abelian subgroup $\Gamma'$ of rank $m$.  Call
the index $c$. Then $\rho_i(\Gamma')$ is an abelian subgroup of $\Gamma_i$
(of index at most $c$) generated by at most $n$ elements. 
This contradicts our assumptions and proves
Theorem~\ref{thm-lower-ricci-noncol}. \qed
\small
\bibliographystyle{alpha}
%\bibliography{master-april-2016}

\begin{thebibliography}{99}

\bibitem[An90]{An90}
M. Anderson.
\newblock Convergence and rigidity of manifolds under {Ricci} curvature bounds.
\newblock {\em Inv. Math.} 102(2):429--445, 1990.

\bibitem[An92]{An92}
M. Anderson.
\newblock Hausdorff perturbations of {R}icci-flat manifolds and the
splitting theorem.
\newblock {\em Duke Math. J.} 68(1):67--82, 1992.

\bibitem[AC91]{AC91}
M. Anderson and J. Cheeger.
\newblock {Diffeomorphism finiteness for manifolds with Ricci curvature and
  $L^{n/2}$-norm of curvature bounded.}
\newblock {\em Geom. Funct. Anal.} 1(3):231--252, 1991.

\bibitem[BGT12]{BGT12} 
E. Breuillard, B. Green and T. Tao.
\newblock {\em The structure of approximate groups}
\newblock 
Publ. Math. Inst. Hautes \'Etudes Sci. 116:115--221, 2012.

\bibitem[BQ00]{BQ00}
D. Bakry and Z. Qian.
\newblock Some new results on eigenvectors via dimension, diameter, and 
{R}icci curvature.
\newblock {\em Adv. Math.} 155(1):98--153, 2000.

\bibitem[Ba90a]{Bando90}
S. Bando.
\newblock Bubbling out of {E}instein manifolds.
\newblock {\em Tohoku Math. J. (2)} 42(2):205--216, 1990.

\bibitem[Ba90b]{Bando90-2}
S. Bando.
\newblock Correction and addition: ``{B}ubbling out of {E}instein manifolds''.
\newblock {\em Tohoku Math. J. (2)} 42(4):587--588, 1990.

\bibitem[BKN89]{BKN89}
S. Bando, A. Kasue and H. Nakajima.
\newblock On a construction of coordinates at infinity on manifolds with 
fast curvature decay and maximal volume growth.
\newblock {\em Inv. Math.} 97(2):313--349, 1989.

\bibitem[BHPV04]{BHPV04}
W. Barth, K. Hulek, C. Peters and A. Van de Ven.
\newblock {Compact complex surfaces.}
\newblock Springer-Verlag, Berlin, 2004.

\bibitem[BGR07]{BGR07}
F. Belgun, N. Ginoux and H.-B. Rademacher.
\newblock A singularity theorem for twistor spinors.
\newblock {\em Annales de l'Inst. Fourier} 57(4):1135--1159, 2007.
  
\bibitem[Bi13]{Bi13}
O. Biquard.
\newblock D\'esingularisation de m\'etriques d'Einstein, I.
\newblock {\em Inv. Math.} 192(1): 197--252, 2013.

\bibitem[BM11]{BM11}
O. Biquard and V. Minerbe.
\newblock
A {K}ummer construction for gravitational instantons.
\newblock {\em Comm. Math. Phys.} 308(3):773--794, 2011.

\bibitem[BHMMM15]{BHMMM15}
J.-P. Bourguignon, O. Hijazi, J.-L. Milhorat, A. Moroianu and
S. Moroianu.
\newblock {A spinorial approach to {R}iemannian and conformal geometry.}
\newblock EMS Monographs in Mathematics,
European Mathematical Society, Z\"urich, 2015.

\bibitem[BK]{BK} S. Brendle and N. Kapouleas.
\newblock Gluing Eguchi-Hanson metrics and a question of Page.
\newblock {\em Comm. Pure and Applied Math.} 70(7):1366-1401, 2017. 

\bibitem[CW]{CW} E. Cabezas-Rivas and B. Wilking.
\newblock to appear.

\bibitem[Ch69]{Ch69}
J. Cheeger.
\newblock
Pinching theorems for a certain class of Riemannian manifolds.
\newblock
{\em Amer. J. Math.} 91:807--834, 1969.

\bibitem[CC96]{CC}
J. Cheeger and T. Colding.
\newblock Lower bounds on {R}icci curvature and the almost rigidity of warped
  products.
\newblock {\em Ann. of Math.} 144(1):189--237, 1996.

\bibitem[CC97]{CC97}
J. Cheeger and T. Colding.
\newblock On the structure of spaces with {R}icci curvature bounded below {I}.
\newblock {\em J. Differential Geom.} 46(3):406-480, 1997. 

\bibitem[CG71]{CG71}
J. Cheeger and D. Gromoll.
\newblock The splitting theorem for manifolds of nonnegative {R}icci curvature
\newblock {\em J. Differential Geometry} 6:119--128, 1971/72.

\bibitem[CG72]{CG72}
J. Cheeger and D. Gromoll.
\newblock On the structure of complete manifolds of nonnegative curvature.
\newblock {\em Ann. of Math.} 96:413--443, 1972.

\bibitem[CN15]{Ch-Na-codim4}
J. Cheeger and A. Naber.
\newblock Regularity of {Einstein} manifolds and the codimension 4 conjecture.
\newblock {\em Ann. of Math.} 182(3):1093--1165, 2015.

\bibitem[CN12]{CN12}
T. Colding and A. Naber.
\newblock Sharp {H}\"older continuity of tangent cones for spaces with a
lower {R}icci curvature bound and applications.
\newblock {\em Ann. of Math.} 176(2):1173--1229, 2012.

\bibitem[FY92]{FY92}
K. Fukaya and T. Yamaguchi.
\newblock The fundamental groups of almost non-negatively curved manifolds.
\newblock {\em Ann. of Math.} 136(2):253-333, 1992.

\bibitem[Ga83]{Ga83}
S. Gallot.
\newblock In\'egalit\'es isop\'erim\'etriques, courbure de {R}icci et
              invariants g\'eom\'etriques {II}.
\newblock {\em C. R. Acad. Sci. Paris S\'er. I Math.} 296(8):365-368, 1983.

\bibitem[Gr78]{Gr78}
M. Gromov.
\newblock Almost flat manifolds.
\newblock {\em J. Differential Geom.} 13(2):231-241, 1978.

\bibitem[Gr82]{Gr82}
M. Gromov.
\newblock Volume and bounded cohomology
\newblock {\em Inst. Hautes \'Etudes Sci. Publ. Math.} 56:5--99, 1982.

\bibitem[Ha86]{Ha86}
R. Hamilton.
\newblock {Four-manifolds with positive curvature operator.}
\newblock {\em J. Differential Geom.} 24(2):153--179 (1986)

\bibitem[Jo00]{Jo00}
D. Joyce.
\newblock {Compact manifolds with special holonomy}.
\newblock Oxford University Press, Oxford, 2000.

\bibitem[KW11]{KW11}
V. Kapovitch and B. Wilking.
\newblock Structure of fundamental groups of manifolds with Ricci curvature 
bounded below.
\newblock http://arxiv.org/abs/1105.5955

\bibitem[KL14]{KL14}
B. Kleiner and J. Lott.
\newblock Geometrization of three-dimensional orbifolds {\it via} {R}icci flow.
\newblock {\em Ast\'erisque} 365:101-177, 2014. 

\bibitem[Kr89]{Kr89}
P. Kronheimer.
\newblock A {T}orelli-type theorem for gravitational instantons.
\newblock {\em J. Differential Geom.} 29(3):685--697, 1989.

\bibitem[Lo00]{Lo00}
J. Lott.
\newblock {$\hat A$}-genus and collapsing.
\newblock {\em J. Geom. Anal.} 10(3):529--543, 2000.

\bibitem[Na90]{Na90}
H. Nakajima.
\newblock Self-duality of {ALE} {R}icci-flat {$4$}-manifolds and positive mass
  theorem.
\newblock In {\em Recent topics in differential and analytic geometry},
  volume~18 of {\em Adv. Stud. Pure Math.}, pages 385--396. Academic Press,
  Boston, MA, 1990.

\bibitem[Pa70]{Pa70}
R. Palais.
\newblock {$C^{1}$} actions of compact {L}ie groups on compact manifolds are 
{$C^{1}$}-equivalent to {$C^{\infty }$} actions.
\newblock {\em Amer. J. Math.} 92:748-760, 1970.

\bibitem[Si12]{Si12}
M. Simon.
\newblock Ricci flow of non-collapsed three manifolds whose {R}icci
curvature is bounded from below.
\newblock {\em J. Reine Angew. Math.} 662:59--94, 2012.

\end{thebibliography}

\end{document}